\documentclass[12pt,final]{article}
\usepackage[margin=2.5cm,top=1.5cm]{geometry}
\usepackage[all,arc]{xy}
\usepackage{amsmath,amsthm,amssymb,enumerate}
\usepackage{color}
\newtheorem{con}{Conjecture}[section]
\newtheorem{theo}[con]{Theorem}

\newtheorem{cor}[con]{Corollary}
\newtheorem{lem}[con]{Lemma}
\newtheorem{prop}[con]{Proposition}
\theoremstyle{definition}

\theoremstyle{remark}

\newcommand{\Romannum}[1]{\uppercase\expandafter{\romannumeral #1}}

\numberwithin{equation}{section}
 \DeclareMathOperator{\sgn}{sgn}

\newcommand\keywordsname{Key words}
\newcommand\AMSname{AMS subject classifications}

\newenvironment{@abssec}[1]{%
     \if@twocolumn
       \section*{#1}%
     \else
       \vspace{.05in}\footnotesize
       \parindent .2in
         {\upshape\bfseries #1. }\ignorespaces
     \fi}
     {\if@twocolumn\else\par\vspace{.1in}\fi}

\begin{document}
\title{\vspace*{3cm} Further results on the nullity of signed graphs\footnote{Research supported by National Natural Science Foundation of China
(No.10901061), the Zhujiang Technology New Star Foundation of
Guangzhou (No.2011J2200090), and Program on International Cooperation and Innovation, Department of Education,
Guangdong Province (No.2012gjhz0007).}}
\author{Yu Liu\footnote{{\it{Corresponding author:\;}}ylhua@scnu.edu.cn} \qquad
Lihua You\footnote{yumeiren2012@126.com} }
\vskip.2cm
\date{{\small
$^{}$School of Mathematical Sciences, South China Normal University\\ Guangzhou, 510631, P.R. China\\
}} \maketitle

\vspace{-7mm}

\begin{abstract}
The nullity of a graph is the multiplicity of the eigenvalues zero in its spectrum.
A signed graph is a graph with a sign attached to each of its edges.
In this paper, we obtain the coefficient theorem of the characteristic polynomial of a signed graph,
give two formulae on the nullity of signed graphs with cut-points.
 As applications of the above results, we investigate the nullity of the bicyclic signed graph $\Gamma(\infty(p,q,l))$,
 obtain  the nullity set of unbalanced bicyclic signed graphs, and thus determine the nullity set of bicyclic signed graphs.
\vskip.2cm \noindent{\it{AMS classification:}} 05C22; 05C50; 15A18
 \vskip.2cm \noindent{\it{Keywords:}}  coefficient theorem; cut-point; signed graph; nullity set; bicyclic.
\end{abstract}

\baselineskip=0.30in

\section{Introduction}
\hskip.6cm Let $G=(V, E)$ be a simple graph with vertex set $V=V(G)=\{v_1,v_2, \ldots, v_n\}$ and edge set $E=E(G)$.
The adjacency matrix $A=A(G)=(a_{ij})_{n\times n}$ of $G$ is defined as follows: $a_{ij}=1$ if there exists an edge joining $v_i$ and $v_j$, and $a_{ij}=0$ otherwise. The nullity of a simple graph $G$ is  the multiplicity of the eigenvalue zero in the spectrum of $A(G)$,  denoted by $\eta(G)$.
The rank of $G$ is referred to the rank of $A(G)$, and denoted by $r(G)$.
Clearly, $\eta(G)+r(G)=n$ if $G$ has $n$ vertices.

A signed graph $\Gamma(G)=(G,\sigma)$  is a graph with a sign attached to each of its edges,
consists of a simple graph $G=(V,E)$, referred to as its underlying graph, and a mapping $\sigma:E\rightarrow\{+,-\}$,  the edge labeling.
To avoid confusion, we also write $V(\Gamma(G))$ instead of $V$, $E(\Gamma(G))$ instead of $E$, and $E(\Gamma(G))={E}^{\sigma}$.

The adjacency matrix of $\Gamma(G)$ is $A(\Gamma(G))=(a_{ij}^{\sigma})$ with $a_{ij}^{\sigma}=\sigma(v_{i}v_{j})a_{ij}$, where $(a_{ij})$ is the adjacency matrix of the underlying graph $G$. In the case of $\sigma=+$, which is an all-positive edge labeling, $A(G,+)$ is exactly the classical adjacency matrix of $G$. So a simple graph is always assumed as a signed graphs with all edges positive.

The nullity of a signed graph $\Gamma(G)$ is defined as the multiplicity of the eigenvalue zero in the spectrum of $A(\Gamma(G))$, and is denoted by $\eta(\Gamma(G))$.
The rank of $\Gamma(G)$ is referred to the rank of $A(\Gamma(G))$, and denoted by $r(\Gamma(G))$.
Surely, $\eta(\Gamma(G))+r(\Gamma(G))=n$ if $\Gamma(G)$ has $n$ vertices.

Let $\Gamma(G)=(G,\sigma)$ be a signed graph, $\Gamma(C)$ be a signed cycle  of $\Gamma(G)$.
The sign of $\Gamma(C)$ is defined by $\sgn(\Gamma(C))=\prod\limits_{e\in \Gamma(C)}\sigma(e)$.
The cycle $\Gamma(C)$  is said to be positive or negative if $\sgn(\Gamma(C))=+$ or $\sgn(\Gamma(C))=-$.
 A signed graph is said to be balanced if all its cycles are positive, or equivalently,
 all cycles have  even number of negative edges; otherwise it is called unbalanced.

About the nullity of simple graphs and its applications, there are many known results, see  \cite{2007}, \cite{2011}, \cite{2012}, \cite{1957}, \cite{20072}, \cite{2009}, \cite{2010}, \cite{20122}, \cite{2001}, \cite{20112}, \cite{20092}, \cite{2008a}, \cite{2008c}, \cite{2008b}, \cite{1950} for details. About signed graphs, it was introduced by Harary \cite{1953} in connection with the study of the theory of social balance in social psychology. More results of signed graphs and their applications, see \cite{1994}, \cite{1980}, \cite{19802}, \cite{20131}, \cite{20132}, \cite{1995}, \cite{1953}, \cite{2003}, \cite{2005}, \cite{1987}, \cite{19942}, \cite{1999}, \cite{2008d}.

In this paper, we obtain the coefficient theorem of the characteristic polynomial of a signed graph,
give two formulae on the nullity of signed graphs with cut-points.
 As applications of the above results, we investigate the nullity of the bicyclic signed graph $\Gamma(\infty(p,q,l))$,
 obtain  the nullity set of unbalanced bicyclic signed graphs, and thus determine the nullity set of bicyclic signed graphs.

\section{The coefficients of $P_{\Gamma(G)}(\lambda)$}
\hskip.6cm In this section, we obtain the coefficients theorem of the characteristic polynomial of a signed graph $\Gamma(G)$, $P_{\Gamma(G)}(\lambda)$,
and the nullity of a signed cycle $\Gamma(C_{n})$ by using the coefficients theorem.

\begin{theo}\label{thm21}{\rm (\cite{1980})}
Let $$P_{G}(\lambda)=\mid\lambda I_n-A\mid=\lambda^{n}+a_{1}\lambda^{n-1}+\cdots+a_{n}$$ be the characteristic polynomial of an arbitrary undirected weighted multigraph $G$.

Call an ``elementary figure":  
a) the graph $K_{2}$,
 or
b) every graph $C_{q}(q\geq1)$ (loops being included with $q=1$).

Call a \lq\lq basic figure" $U$ every graph all of whose component are elementary figures; let $p(U), c(U)$ be the number of components and the number of cycles contained in $U$, respectively, and let $\mathcal{U}_{i}$ denoted the set of all basic figures contained in $G$ having exactly $i$ vertices.
Then for any $i\in\{1,2,\ldots, n\}$,
$$a_{i}=\sum\limits_{U\in\mathcal{U}_{i}} (-1)^{p(U)}\cdot2^{c(U)}\cdot\prod(U), \quad \prod(U)=\prod\limits_{e\in E(U)} (w(e))^{\xi(e; U)},$$
\noindent where $E(U)$ is the set of edges of $U$, $w(e)$ is the weight of the edge $e$, and
$$ \xi(e;U)=\left\{
\begin{array}{lll}
1,     &     &  {\mbox{if } e \mbox{ is contained in some cycle of } U;}\\
2,     &     &  {\mbox{otherwise.} }
\end{array}
\right. $$
\end{theo}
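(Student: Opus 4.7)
\medskip

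\noindent\textbf{Proof proposal.} The plan is to expand $P_G(\lambda)=\det(\lambda I_n-A)$ by the Leibniz formula and to match each surviving permutation with a basic figure of $G$. Writing
\[
P_G(\lambda)=\sum_{i=0}^{n}a_i\lambda^{n-i},
\]
the standard identity $a_i=(-1)^i\sum_{S}\det(A_S)$, with $S$ ranging over the $i$-subsets of $V(G)$ and $A_S$ the corresponding principal submatrix, reduces the problem to computing each $\det(A_S)$ and then assembling the pieces.

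First I would apply the Leibniz expansion $\det(A_S)=\sum_{\pi\in\mathrm{Sym}(S)}\sgn(\pi)\prod_{j\in S}A_{j,\pi(j)}$ and discard every permutation with a vanishing factor. Decomposing each surviving $\pi$ into disjoint cyclic orbits, the fixed points select loops of $G$, the transpositions select $K_2$-components (or digons in the multigraph case), and each orbit of length $q\ge 3$ selects a cycle $C_q$ of $G$. Conversely, from a basic figure $U$ spanning $S$ the admissible permutations are exactly those obtained by choosing, for each $C_q$-component with $q\ge 3$, one of its two cyclic orientations; together with the standard multigraph conventions for diagonal (loop) and parallel-edge entries, this yields precisely $2^{c(U)}$ distinct permutations associated with a given $U$. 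For the weight factor, an edge of a $K_2$-component contributes at both $A_{ij}$ and $A_{ji}$, giving $w(e)^2$, whereas an edge lying on a cyclic component is used only once along any oriented traversal, giving $w(e)$; these are exactly the exponents $\xi(e;U)$, so that $\prod_{j\in S}A_{j,\pi(j)}=\prod(U)$.

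The remaining step is the sign computation. A $q$-cycle contributes sign $(-1)^{q-1}$, and because the lengths $q_k$ of the cyclic components and the number $t$ of $K_2$-components satisfy $\sum q_k+2t=i$, one obtains $\sgn(\pi)=(-1)^{i-t-c(U)}$; the outer $(-1)^i$ then collapses everything to $(-1)^{t+c(U)}=(-1)^{p(U)}$. Summing the $2^{c(U)}$ identical terms attached to each $U$ and then over all $S$ and all $U\in\mathcal{U}_i$ yields the claimed formula. The chief obstacle is the simultaneous bookkeeping of signs and orientation multiplicities in the multigraph setting, since cyclic components of length $1$ or $2$ admit only one cyclic orientation; the ``missing'' factor of $2$ must be drawn from the standard multigraph conventions for loops and parallel edges in $A$, after which the factor $2^{c(U)}$ is uniform across all cycle lengths and the identity drops out.
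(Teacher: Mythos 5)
There is nothing in the paper to compare against: Theorem \ref{thm21} is imported verbatim from \cite{1980} and used as a black box, so your proposal has to be judged on its own merits. What you propose is the standard argument (essentially Sachs's original one, and in substance the one in \cite{1980}): write $a_i=(-1)^i\sum_{|S|=i}\det(A[S])$, expand each principal minor by the Leibniz formula, sort the surviving permutations by orbit structure, and match orbit decompositions with basic figures. Your sign bookkeeping is correct: with $t$ transpositions realized as $K_2$-components and cycle orbits of total length $\sum_k q_k=i-2t$, one gets $\sgn(\pi)=(-1)^{i-t-c(U)}$, and the outer $(-1)^i$ turns this into $(-1)^{t+c(U)}=(-1)^{p(U)}$; likewise the exponents $\xi(e;U)=2$ for an edge of a $K_2$-component (used at both $A_{uv}$ and $A_{vu}$) and $\xi(e;U)=1$ on cycles come out exactly as claimed.

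The one place you wave your hands --- the factor $2^{c(U)}$ for cycles of length $1$ and $2$ --- should be split into two different mechanisms, only one of which is a ``convention''. For a digon no convention is needed: if $u,v$ are joined by parallel edges of weights $w_1,w_2,\ldots$, the transposition $(uv)$ contributes $-A_{uv}A_{vu}=-(\sum_k w_k)^2$, and expanding multilinearly, the square terms $-w_k^2$ are exactly the $K_2$-figures on the single edges, while each cross term occurs twice, giving $(-1)\cdot 2\,w_kw_l$ for the $C_2$-figure $\{e_k,e_l\}$; the factor $2$ is the number of ways to assign the two parallel edges to the two directions of traversal, i.e. a genuine orientation count, so it falls out of the Leibniz expansion itself. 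For a loop, by contrast, the theorem is simply false unless one adopts the convention (which \cite{1980} does, and which your proof must state explicitly rather than gesture at) that a loop of weight $w$ at $v$ contributes $2w$ to the diagonal entry $A_{vv}$ --- equivalently, that an undirected loop corresponds to a pair of directed loops. Indeed, with $A_{vv}=w$ the one-vertex, one-loop graph has $P(\lambda)=\lambda-w$, i.e. $a_1=-w$, whereas the stated formula predicts $a_1=(-1)^1\cdot 2^1\cdot w=-2w$. Once that convention is made explicit, every fixed point of $\pi$ contributes $2\sum w(\mbox{loops at }v)$, the factor $2^{c(U)}$ becomes uniform over all cycle lengths, and your argument closes; as written, ``drawn from the standard multigraph conventions'' leaves the only genuinely delicate case unproved.
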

\begin{cor}\label{cor21}
Let $\Gamma(G)$ be a signed graph on $n$ vertices and
$$P_{\Gamma(G)}(\lambda)=\mid\lambda I_n-A(\Gamma(G))\mid=\lambda^{n}+a_{1}\lambda^{n-1}+\cdots+a_{n}$$ be the characteristic polynomial of $A(\Gamma(G))$.
Then for any $i\in\{1,2,\ldots, n\}$,
$$a_{i}=\sum_{\Gamma(U)\in\Gamma(\mathcal{U}_{i})} (-1)^{p(\Gamma(U))+s(\Gamma(U))}\cdot2^{c(\Gamma(U))},$$
\noindent where $s(\Gamma(U))$ is the number of negitive edges in cycle of $\Gamma(U)$, other notations are similar to  Theorem \ref{thm21}.
\end{cor}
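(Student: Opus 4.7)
The plan is to view the signed graph $\Gamma(G)$ as a particular instance of an undirected weighted multigraph, namely the one whose underlying multigraph is the simple graph $G$ and whose edge weights are $w(e)=\sigma(e)\in\{+1,-1\}$. Because the adjacency matrix $A(\Gamma(G))=(\sigma(v_iv_j)a_{ij})$ coincides with the weighted adjacency matrix in this interpretation, Theorem \ref{thm21} applies verbatim to $P_{\Gamma(G)}(\lambda)$.

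After invoking Theorem \ref{thm21}, the only thing to simplify is the weight product $\prod(U)=\prod_{e\in E(U)}(w(e))^{\xi(e;U)}$. I would split $E(U)$ into the edges lying in some cycle of the basic figure and those lying in a $K_2$-component. For an edge $e$ in a $K_2$-component one has $\xi(e;U)=2$, so $(w(e))^{\xi(e;U)}=\sigma(e)^{2}=1$, and such edges contribute trivially. For an edge $e$ lying in a cycle of $U$ one has $\xi(e;U)=1$, so $(w(e))^{\xi(e;U)}=\sigma(e)$. Hence
\[
\prod(U)=\prod_{e\in\text{cycles of }U}\sigma(e)=(-1)^{s(\Gamma(U))},
\]
by the very definition of $s(\Gamma(U))$ as the number of negative edges lying on the cycles of $\Gamma(U)$.

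Substituting this into the formula of Theorem \ref{thm21} gives
\[
a_i=\sum_{\Gamma(U)\in\Gamma(\mathcal{U}_i)}(-1)^{p(\Gamma(U))}\cdot 2^{c(\Gamma(U))}\cdot(-1)^{s(\Gamma(U))}
=\sum_{\Gamma(U)\in\Gamma(\mathcal{U}_i)}(-1)^{p(\Gamma(U))+s(\Gamma(U))}\cdot 2^{c(\Gamma(U))},
\]
which is exactly the asserted identity. There is essentially no obstacle here beyond the bookkeeping of the exponent $\xi(e;U)$; the content of the corollary is simply that squaring a $\pm 1$ weight kills the sign of tree-like edges, so only the signs carried by edges on cycles survive, and their product is recorded by the parity $s(\Gamma(U))$. (Note also that since the underlying graph $G$ is simple, elementary figures of type $C_1$ do not occur, so no separate discussion of loops is needed.)
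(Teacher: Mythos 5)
Your proof is correct and takes essentially the same route as the paper, which also obtains the corollary by specializing Theorem \ref{thm21} to weights $w(e)=\pm 1$ and observing that the product $\prod(U)$ collapses to $(-1)^{s(\Gamma(U))}$. You merely spell out the $\xi(e;U)$ bookkeeping (squared weights on $K_2$-components vanish, cycle edges survive) that the paper's one-line proof leaves implicit.
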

\begin{proof}
Since $\Gamma(G)$ is a signed graph, $w(e)=+1$ or $-1$, then $\prod(\Gamma(U))=\prod\limits_{e\in E(\Gamma(U))} (-1)^{s(\Gamma(U))}$.
Thus the result follows from Theorem \ref{thm21}.
\end{proof}

\begin{lem}\label{lem21}
{\rm (1)  (\cite{1980})} Let $\Gamma(C_{n})$ be a balanced  cycle. Then $\eta(\Gamma(C_{n}))=2$ if $n\equiv \ 0(\ \bmod \ 4)$ and $\eta(\Gamma(C_{n}))=0$ otherwise.

{\rm (2)  (\cite{20131})} Let $\Gamma(C_{n})$ be an unbalanced signed cycle. Then $\eta(\Gamma(C_{n}))=2$ if $n\equiv \ 2(\ \bmod \ 4)$ and $\eta(\Gamma(C_{n}))=0$ otherwise.
\end{lem}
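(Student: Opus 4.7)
The plan is to apply Corollary~\ref{cor21} directly to $\Gamma(C_n)$, computing the tail coefficients $a_{n-2}, a_{n-1}, a_n$ of $P_{\Gamma(C_n)}(\lambda)$ in enough detail to pin down the multiplicity of $0$ as an eigenvalue. This treats the balanced and unbalanced assertions uniformly; they emerge from a single case split on $n \bmod 4$ and on the parity of the number of negative edges.

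First I would enumerate the basic figures of $\Gamma(C_n)$. The only cycle occurring as a subgraph of $C_n$ is $C_n$ itself, so every basic figure on fewer than $n$ vertices is necessarily a disjoint union of $K_2$'s, i.e.\ a matching. The basic figures on exactly $n$ vertices are the full signed cycle $\Gamma(C_n)$ together with (when $n$ is even) the two perfect matchings of $C_n$ formed from the two sets of alternating edges.

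Next I would compute $a_n$ from Corollary~\ref{cor21}. The full cycle contributes $(-1)^{1+s}\cdot 2 = -2(-1)^{s}$, where $s=s(\Gamma(C_n))$ is the number of negative edges; each perfect matching contributes $(-1)^{n/2}\cdot 2^0 = (-1)^{n/2}$, and there are two of them. Therefore
\[
a_n \;=\; -2(-1)^{s}\;+\;\begin{cases}2(-1)^{n/2}&\text{if } n \text{ is even},\\ 0 & \text{if } n \text{ is odd}.\end{cases}
\]
A short case analysis on $n\bmod 4$ and on the parity of $s$ (even iff $\Gamma(C_n)$ is balanced) shows that $a_n=0$ exactly in the two situations appearing in the lemma, while in every other case $a_n\neq 0$ and hence $\eta(\Gamma(C_n))=0$.

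What remains is the upper bound $\eta(\Gamma(C_n))\leq 2$ in the two cases where $a_n$ vanishes. In both of these $n$ is even, so there are no basic figures on the odd vertex count $n-1$ (matchings require an even number of vertices, and $C_n$ contains no proper cycle), hence $a_{n-1}=0$. For $a_{n-2}$ the basic figures are precisely the $(n-2)/2$-matchings of $C_n$; by the classical formula $\tfrac{n}{n-m}\binom{n-m}{m}$ with $m=(n-2)/2$ there are exactly $n^2/4$ of them, so $a_{n-2}=(-1)^{(n-2)/2}\cdot n^2/4\neq 0$. Consequently $\lambda^2$ exactly divides $P_{\Gamma(C_n)}(\lambda)$, giving $\eta(\Gamma(C_n))=2$. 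The main obstacle is bookkeeping: tracking signs in Corollary~\ref{cor21} across the case split and counting matchings without error. The absence of short cycle subgraphs in $C_n$ makes the whole enumeration transparent and is what reduces the argument to the three coefficients displayed above.
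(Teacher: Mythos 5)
Your proposal is correct and takes essentially the same route as the paper, which proves this lemma (restated as Theorem \ref{thm22}) by applying Corollary \ref{cor21}: the full cycle and, for even $n$, the two perfect matchings give exactly your expression for $a_n$, the same case split on $n \bmod 4$ and the parity of $s$ locates the vanishing cases, and $a_{n-1}=0$, $a_{n-2}\neq 0$ then forces $\eta(\Gamma(C_n))=2$. Your explicit count $a_{n-2}=(-1)^{(n-2)/2}\, n^2/4$ (all matching contributions having the same sign) merely fills in a detail the paper dismisses as ``clear'' by Corollary \ref{cor21}.
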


Let $s=s(\Gamma(C_{n}))$ be the number of negative edges of $\Gamma(C_{n})$. It is clear that $\Gamma(C_{n})$ is balanced if and only if $s\equiv 0(\ \bmod \ 2)$; $\Gamma(C_{n})$ is unbalanced if and only if  $s\equiv 1(\ \bmod \ 2)$. Then Lemma \ref{lem21} is equivalent to the following Theorem \ref{thm22}. We will give a new proof  by Corollary \ref{cor21}.
\begin{theo}\label{thm22}
Let $\Gamma(C_{n})=(C_n, \sigma)$ be a signed cycle on $n$ vertices, $s$ is the number of negative edges of $\Gamma(C_{n})$. Then
$$ \eta({\Gamma(C_{n})})=\left\{
\begin{array}{ll}
2,         &  {\mbox{if } n \equiv \ 0(\ \bmod \ 4), s\equiv \ 0(\ \bmod \ 2);}\\
2,         &  {\mbox{if } n \equiv \ 2(\ \bmod \ 4), s\equiv \ 1(\ \bmod \ 2);}\\
0,         &  {\mbox{if } n \equiv \ 1(\ \bmod \ 2);}\\
0,         &  {\mbox{if } n \equiv \ 0(\ \bmod \ 4), s\equiv \ 1(\ \bmod \ 2);}\\
0,         &  {\mbox{if } n \equiv \ 2(\ \bmod \ 4), s\equiv \ 0(\ \bmod \ 2).}
\end{array}
\right. $$
\end{theo}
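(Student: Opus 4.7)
The plan is to apply Corollary~\ref{cor21} directly to $\Gamma(C_n)$ in order to read off the last few coefficients of $P_{\Gamma(C_n)}(\lambda)$, then use them to pin down the multiplicity of $\lambda=0$ as a root. The key preliminary observation is that $C_n$ contains no proper subcycle, so the only cycle that can appear as a component of a basic figure is $\Gamma(C_n)$ itself (with $p=1$, $c=1$, and $s$ negative edges); every other basic figure is a matching, which necessarily covers an even number of vertices. Consequently $a_i=0$ for all odd $i<n$; in particular $a_{n-1}=0$ whenever $n$ is even, and the only coefficients that can decide the nullity are $a_n$ and $a_{n-2}$.

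Next I would compute $a_n$. If $n$ is odd, the unique basic figure on $n$ vertices is $\Gamma(C_n)$, giving $a_n=(-1)^{1+s}\cdot 2=-2(-1)^s\neq 0$, hence $\eta(\Gamma(C_n))=0$, which settles the two odd-$n$ lines of the theorem. If $n$ is even we must also include the two perfect matchings of $C_n$, each contributing $(-1)^{n/2}\cdot 2^0=(-1)^{n/2}$, so
\[
a_n \;=\; 2\bigl[(-1)^{n/2}-(-1)^s\bigr].
\]
Thus $a_n=0$ iff $n/2\equiv s\pmod 2$, which reformulated via $n\bmod 4$ is exactly the two cases in which the theorem predicts $\eta=2$; in the two complementary even cases $a_n\neq 0$, so $\eta=0$.

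For the remaining two cases we already have $a_n=a_{n-1}=0$, and it suffices to show $a_{n-2}\neq 0$. A basic figure on $n-2$ vertices is a matching of size $n/2-1$ in $C_n$, and by the standard count $\frac{n}{n-k}\binom{n-k}{k}$ with $k=n/2-1$ the number of such matchings is $n^2/4$. Each contributes $(-1)^{n/2-1}$, so
\[
a_{n-2} \;=\; (-1)^{n/2-1}\cdot \frac{n^2}{4}\;\neq\; 0,
\]
giving $\eta(\Gamma(C_n))=2$ in both cases. Nothing here looks delicate; the one place that needs care is the sign bookkeeping from Corollary~\ref{cor21}—making sure the exponent $p(U)+s(U)$ is applied correctly to the cycle contribution—and remembering that $C_n$ has exactly two perfect matchings (when $n$ is even) and no other subcycles, which is what collapses the entire enumeration to the three cases computed above.
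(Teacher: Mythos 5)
Your proposal is correct and takes essentially the same route as the paper: both apply Corollary~\ref{cor21}, compute $a_n$ as the contribution of the single cycle plus (for even $n$) the two perfect matchings, settle odd $n$ and the two non-degenerate even cases via $a_n\neq 0$, and conclude $\eta=2$ in the remaining cases from $a_{n-1}=0$ and $a_{n-2}\neq 0$. The only difference is that where the paper simply asserts $a_{n-2}\neq 0$ ``by Corollary~\ref{cor21}'', you verify it explicitly via the matching count $a_{n-2}=(-1)^{n/2-1}\,n^{2}/4$, which correctly fills in that detail.
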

\begin{proof}
By Corollary \ref{cor21},  for any $i\in\{1,2,\ldots, n\}$, we have $$a_{i}=\sum_{\Gamma(U)\in\Gamma(\mathcal{U}_{i})} (-1)^{p(\Gamma(U))+s(\Gamma(U))}\cdot2^{c(\Gamma(U))}.$$

{\bf Case 1: } $n \equiv \ 1(\ \bmod \ 2)$.

 Clearly, $\Gamma(U)=\Gamma(C_{n})$. Thus $a_{n}=2\cdot (-1)^{s+1}\not= 0$.

{\bf Case 2: } $n \equiv \ 0(\ \bmod \ 2)$.

Clearly, $\Gamma(U)=\Gamma(C_{n})$ or $\Gamma(U)=\frac{n}{2}\Gamma(K_{2})$, and there exist two basic figures $\frac{n}{2}\Gamma(K_{2})$ in $\Gamma(C_{n})$. Then
$$ a_{n}=2\cdot (-1)^{s+1}+2\cdot (-1)^{\frac{n}{2}}=\left\{
\begin{array}{lll}
0,      &     &  {\mbox{if } n \equiv \ 0(\ \bmod \ 4), s\equiv \ 0(\ \bmod \ 2);}\\
4,      &     &  {\mbox{if } n \equiv \ 0(\ \bmod \ 4), s\equiv \ 1(\ \bmod \ 2);}\\
-4,     &     &  {\mbox{if } n \equiv \ 2(\ \bmod \ 4), s\equiv \ 0(\ \bmod \ 2);}\\
0,      &     &  {\mbox{if } n \equiv \ 2(\ \bmod \ 4), s\equiv \ 1(\ \bmod \ 2).}\\
\end{array}
\right. $$

If $a_{n}\neq0$, then $\eta(\Gamma(C_{n}))=0$.

If $a_{n}=0$, then we consider $a_{n-1}$ and $a_{n-2}$. Since $n$ is even, it's clear that $a_{n-1}=0$ and $a_{n-2}\neq0$ by Corollary \ref{cor21}. Thus $\eta(\Gamma(C_{n}))=2$.
\end{proof}

Similarly,  by Corollary \ref{cor21}, we have
\begin{prop}\label{prop21}
Let $\Gamma(P_{n})=(P_n,\sigma)$ be a signed path on $n$ vertices. Then
$$ \eta(\Gamma(P_{n}))=\left\{
\begin{array}{lll}
1,     & {\mbox{if } n \mbox{ is odd}; }\\
0,     & {\mbox{if } n \mbox{ is even}. }
\end{array} \right. $$
\end{prop}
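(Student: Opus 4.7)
The plan is to mimic the proof of Theorem \ref{thm22}, applying Corollary \ref{cor21} directly to the signed path $\Gamma(P_n)$ and exploiting the crucial simplification that $P_n$ has no cycles at all. Because every basic figure $\Gamma(U)$ in $\Gamma(P_n)$ must have $c(\Gamma(U))=0$ and $s(\Gamma(U))=0$, each $\Gamma(U)$ is simply a disjoint union of $K_2$'s (a matching in $P_n$). So I expect the formula to collapse to
\[
a_i = (-1)^{i/2}\, m_i(P_n),
\]
where $m_i(P_n)$ denotes the number of matchings of $P_n$ covering exactly $i$ vertices, with the convention $m_i=0$ when $i$ is odd. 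In particular, the sign assignment $\sigma$ plays no role whatsoever, which matches intuition: a signed path has the same spectrum as its underlying path.

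Next I would split into two cases on the parity of $n$. If $n$ is even, $P_n$ has a unique perfect matching (pair $v_{2k-1}$ with $v_{2k}$), so $a_n = (-1)^{n/2}\neq 0$; hence $r(\Gamma(P_n))=n$ and $\eta(\Gamma(P_n))=0$. If $n$ is odd, then $a_n=0$ since no basic figure on $n$ vertices exists, and I would proceed to $a_{n-1}$: a matching covering $n-1$ vertices leaves exactly one vertex $v_i$ uncovered, and $P_n-v_i$ (with components $P_{i-1}$ and $P_{n-i}$) admits a perfect matching iff $i$ is odd. Counting gives $m_{n-1}(P_n)=(n+1)/2$, so $a_{n-1}=(-1)^{(n-1)/2}(n+1)/2\neq 0$. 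Therefore $r(\Gamma(P_n))\geq n-1$, and since $a_n=0$ implies $\det A(\Gamma(P_n))=\pm a_n=0$, we get $r(\Gamma(P_n))=n-1$, i.e.\ $\eta(\Gamma(P_n))=1$.

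I do not foresee a real obstacle here; the main small verification is the counting of matchings in $P_n$ of a given size, which reduces to the elementary observation above about the parity of the index of the unmatched vertex. One could equally invoke the classical fact that $\eta(P_n)$ is $0$ or $1$ according to the parity of $n$, but using Corollary \ref{cor21} keeps the proof in the spirit of Theorem \ref{thm22} and makes transparent why the edge signs are irrelevant.
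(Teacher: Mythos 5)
Your proposal is correct and follows exactly the route the paper intends: the paper gives no written proof, stating only that the result follows ``Similarly, by Corollary \ref{cor21}'' in the manner of Theorem \ref{thm22}, and your argument fills in precisely that computation (acyclic basic figures are matchings, so $a_i=(-1)^{i/2}m_i(P_n)$, with the unique perfect matching giving $a_n\neq 0$ for even $n$, and $a_n=0$, $a_{n-1}=(-1)^{(n-1)/2}(n+1)/2\neq 0$ giving nullity $1$ for odd $n$). Your matching counts and the observation that the signs $\sigma$ drop out (since $s(\Gamma(U))=0$ and non-cycle edges contribute $(w(e))^2=1$) are all correct.
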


\section{The nullity of a signed graph with cut-points}
\hskip.5cm In this section, we deduce two concise formulae on the nullity of  signed graphs with cut-points by similar technique which applied in \cite{20122}.

We first introduce some concepts and notations.

Let $G$ be a simple graph with vertex set $V=V(G)$.
For a nonempty subset $U$ of $V$, the subgraph with vertex set $U$ and edge set consisting of those pairs of vertices that
are edges in $G$ is called the induced subgraph of $G$, denoted by $G[U]$.
Denote by $G-U$, where $U\subseteq V$,
the graph obtained from $G$ by removing the vertices of $U$ together with all edges incident to them.
Sometimes we use the notation $G-G_1$ instead of $G-V(G_1)$ if $G_1$ is an induced subgraph of $G$.
For an induced subgraph $G_1$ (of $G$) and $v\in G-G_1$, the induced subgraph $G[V(G_1)\cup \{v\}]$ is simply written  as $G_1+v$.
The vertex $v\in V$ is called a cut-point of $G$ if the resultant graph $G-v$ is disconnected.

Let $A$ be the adjacency matrix of a graph $G=(V(G),E(G))$ on $n$ vertices.
 For $U\subseteq V(G)$, $W\subseteq V(G)$, denote by $A[U,W]$ the submatrix of
$A$ with rows corresponding to the vertices of $U$ and columns corresponding to the vertices of $W$.
For simplify, the submatrix $A[U, U]$ is written as $A[U]$.
 For convenience, we usually write $A[G_1, G_2]$  instead of the standard $A[V(G_1), V(G_2)]$  for the two induced subgraphs $G_1$ and $G_2$ of $G$.
In particular, denote by $A[v, G]$ the row vector of $A$ corresponding to the vertex $v$ and by $A[v, G_i]$
the subvector of $A[v, G]$ corresponding to the vertices of $G_i$. We refer to Cvetkovi\'{c}  et al. \cite{1980} for more
terminologies and notations not defined here.

The following lemma is obvious.

\begin{lem}\label{lem31}
Let  $G=G_1\cup G_2\cup \cdots \cup G_s$, and  $\Gamma(G)$ be a signed graph,   where $G_1, G_2, \ldots, G_s$ are the connected components of $G$. Then $\Gamma(G_{1}), \Gamma(G_{2}),\ldots, \Gamma(G_{s})$ are the connected components of $\Gamma(G)$,
$r(\Gamma(G))=\sum\limits_{i=1}^{s}r(\Gamma(G_{i})),$ and $\eta(\Gamma(G))=\sum\limits_{i=1}^{s}\eta(\Gamma(G_{i})).$
\end{lem}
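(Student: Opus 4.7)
The plan is to exploit the block-diagonal structure of $A(\Gamma(G))$ induced by the component decomposition of the underlying graph. First I would verify that the connected components of the signed graph $\Gamma(G)$ coincide with $\Gamma(G_1),\ldots,\Gamma(G_s)$. This is essentially by definition: connectivity of a signed graph depends only on its underlying graph, and since no edge of $G$ joins vertices from two different $G_i$, the labeling $\sigma$ restricts to each $G_i$ to yield well-defined signed graphs $\Gamma(G_i)$, and $\Gamma(G)$ has no edge between distinct $\Gamma(G_i)$.

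Next, after reindexing the vertices of $\Gamma(G)$ so that the vertices of $\Gamma(G_1)$ come first, then those of $\Gamma(G_2)$, and so on, the adjacency matrix takes the block-diagonal form
$$A(\Gamma(G))=\operatorname{diag}\bigl(A(\Gamma(G_1)),\,A(\Gamma(G_2)),\,\ldots,\,A(\Gamma(G_s))\bigr),$$
because every entry $a_{uv}^{\sigma}$ with $u$ and $v$ lying in distinct components is zero. Invoking the standard linear-algebra identity that the rank of a block-diagonal matrix equals the sum of the ranks of its diagonal blocks yields
$$r(\Gamma(G))=\sum_{i=1}^{s}r(\Gamma(G_i)).$$

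Finally, to obtain the nullity formula I would simply combine $\eta(\Gamma(G))+r(\Gamma(G))=n$, where $n=|V(G)|$, with the analogous identities $\eta(\Gamma(G_i))+r(\Gamma(G_i))=|V(G_i)|$, and use $n=\sum_{i=1}^{s}|V(G_i)|$ to conclude.

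There is no genuine obstacle here, which is consistent with the authors' remark that the lemma is obvious; the only small care point is confirming that the restriction of $\sigma$ to each component is well-defined, which is immediate since the $G_i$ partition both the vertex and edge sets of $G$.
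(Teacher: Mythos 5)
Your proof is correct: the paper itself gives no proof, simply remarking that the lemma is obvious, and your block-diagonal decomposition of $A(\Gamma(G))$ together with rank additivity and the relation $\eta+r=n$ is precisely the standard argument the authors have in mind. Nothing is missing, and your care point about the restriction of $\sigma$ being well-defined is fine but indeed immediate.
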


\begin{theo}\label{thm31}
Let $\Gamma(G)$ be a connected signed graph on $n$ vertices, $v$ be a cut-point of $\Gamma(G)$, and $\Gamma(G_{1}), \Gamma(G_{2}),\ldots, \Gamma(G_{s})$ are all the components of $\Gamma(G-v)$. If there exists a component, say $\Gamma(G_{1})$, among $\Gamma(G_{1}),\Gamma(G_{2}),\ldots, \Gamma(G_{s})$ such that $\eta(\Gamma(G_{1}))=\eta(\Gamma(G_{1}+v))+1 $. Then $\eta(\Gamma(G))=\eta(\Gamma(G-v))-1=\sum\limits_{i=1}^{s}{\eta(\Gamma(G_{i}))}-1$.
\end{theo}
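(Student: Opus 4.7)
The plan is to convert everything to rank statements and reason directly with the block structure of the adjacency matrix. Set $A_i=A(\Gamma(G_i))$, $A^{*}=A(\Gamma(G-v))=\mathrm{diag}(A_1,\ldots,A_s)$, and $\alpha_i=A[v,G_i]$ (a row vector), so that in the vertex ordering with $v$ last,
\[
A(\Gamma(G))=\begin{pmatrix} A^{*} & \alpha^{T} \\ \alpha & 0 \end{pmatrix},\qquad \alpha=(\alpha_1,\alpha_2,\ldots,\alpha_s).
\]
Since $\eta+r=n$ on each graph, the desired conclusion $\eta(\Gamma(G))=\eta(\Gamma(G-v))-1$ is equivalent to $r(\Gamma(G))=r(\Gamma(G-v))+2$; that is, reattaching $v$ to $\Gamma(G-v)$ must produce the maximal rank jump of $2$.

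The first key step is to translate the hypothesis into a linear-algebraic condition on $\alpha_1$. The condition $\eta(\Gamma(G_1))=\eta(\Gamma(G_1+v))+1$ rewrites as $r(\Gamma(G_1+v))=r(A_1)+2$. If instead $\alpha_1\in\mathrm{row}(A_1)$, say $\alpha_1=yA_1$, then by symmetry of $A_1$ one gets $\alpha_1^{T}=A_1y^{T}\in\mathrm{col}(A_1)$, so $r([A_1\mid\alpha_1^{T}])=r(A_1)$; appending the final row $(\alpha_1,0)$ can then raise the rank by at most $1$, contradicting the required jump of $2$. Hence the hypothesis forces $\alpha_1\notin\mathrm{row}(A_1)$.

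Next I would lift this to the full graph. Any vector in $\mathrm{row}(A^{*})$ has the block form $(w_1,\ldots,w_s)$ with $w_i\in\mathrm{row}(A_i)$, so $\alpha_1\notin\mathrm{row}(A_1)$ immediately gives $\alpha\notin\mathrm{row}(A^{*})$, and by symmetry $\alpha^{T}\notin\mathrm{col}(A^{*})$. Thus $r([A^{*}\mid\alpha^{T}])=r(A^{*})+1$. For the last step, if the row $(\alpha,0)$ lay in $\mathrm{row}[A^{*}\mid\alpha^{T}]$, then projecting the corresponding linear dependence onto the first $n-1$ columns would express $\alpha$ as a combination of rows of $A^{*}$, contradicting $\alpha\notin\mathrm{row}(A^{*})$. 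Therefore $r(A(\Gamma(G)))=r(A^{*})+2$, and combining with Lemma \ref{lem31} yields
\[
\eta(\Gamma(G))=\eta(\Gamma(G-v))-1=\sum_{i=1}^{s}\eta(\Gamma(G_i))-1.
\]

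The anticipated obstacle is the first step. Because the lower-right entry of the bordered matrix is $0$ (the vertex $v$ carries no loop), one has to rule out the possibility that this zero masks the second rank contribution; symmetry of $A_1$ is what ensures that $\alpha_1^{T}\in\mathrm{col}(A_1)$ follows from $\alpha_1\in\mathrm{row}(A_1)$, and this is what blocks the second rank jump when $\alpha_1$ lies in the row space. Once $\alpha_1\notin\mathrm{row}(A_1)$ is secured, the block-diagonality of $A^{*}$ renders the remaining bordering arguments essentially mechanical.
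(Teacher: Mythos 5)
Your proposal is correct and takes essentially the same approach as the paper: both arguments translate the hypothesis $\eta(\Gamma(G_{1}))=\eta(\Gamma(G_{1}+v))+1$ into the rank statement $r(\Gamma(G_{1}+v))=r(\Gamma(G_{1}))+2$, conclude that $A[v,\Gamma(G_{1})]$ lies outside the row space of $A[\Gamma(G_{1})]$, and then use the block-diagonal structure of $A(\Gamma(G-v))$ together with the symmetry of $A$ to obtain $r(\Gamma(G))=r(\Gamma(G-v))+2$, hence $\eta(\Gamma(G))=\eta(\Gamma(G-v))-1$ via Lemma \ref{lem31}. Your write-up is in fact a bit more careful than the paper's, since you explicitly verify the step the paper leaves implicit, namely that $\alpha_{1}\in\mathrm{row}(A_{1})$ would cap the bordered rank increase at $1$ and that each of the two bordering steps (column $\alpha^{T}$, then row $(\alpha,0)$) raises the rank by exactly one.
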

\begin{proof}
Let $A=A(\Gamma(G))$ be the adjacency matrix of $\Gamma(G)$. For each $i$, denote by $A[\Gamma(G_{i})]$ the adjacency matrix of the subgraph $\Gamma(G_{i})$ and by $A[v,\Gamma(G_{i})]$ the subvector of $A[v,\Gamma(G)]$ corresponding to the vertices of $\Gamma(G_{i})$, then the matrix $A$ can be partitioned as:
\begin{gather*}
\begin{bmatrix} 0 & A[v,\Gamma(G_{1})] & A[v,\Gamma(G_{2})] & \cdots & A[v,\Gamma(G_{s})] \\ A[\Gamma(G_{1}),v] & A[\Gamma(G_{1})] & 0 & \cdots & 0 \\ A[\Gamma(G_{2}),v] & 0 & A[\Gamma(G_{2})] & \cdots & 0 \\ \cdots & \cdots & \cdots & \cdots & \cdots \\ A[\Gamma(G_{s}),v] & 0 & 0 & \cdots & A[\Gamma(G_{s})]
\end{bmatrix},
\end{gather*}
where $A[\Gamma(G_{i}),v]=A[v,\Gamma(G_{i})]^{T}(i=1,2,\ldots,s)$, the transpose of $A[v,\Gamma(G_{i})]$, for each $i$.

Note that $\eta(\Gamma(G_{1}))=\eta(\Gamma(G_{1}+v))+1$, then $r(\Gamma(G_{1}+v))=r(\Gamma(G_{1}))+2$, thus the row vector $A[v,\Gamma(G_{1})]$ is not linear combination of the row vector of $A[\Gamma(G_{1})]$, therefore the row vector $A[v,\Gamma(G)]$ is not linear combination of any other row vectors of $A[\Gamma(G)]$. Since $A$ is a symmetric matrix, the column vector $A[\Gamma(G),v]$ is not linear combination of any other column vectors of $A$, which implies that $r(\Gamma(G))=r(\Gamma(G-v))+2$. Then by Lemma \ref{lem31},

$\eta(\Gamma(G))=n-r(\Gamma(G))=n-r(\Gamma(G-v))-2=\eta(\Gamma(G-v))-1=\sum\limits_{i=1}^{s}{\eta(\Gamma(G_{i}))}-1$ .
\end{proof}

\begin{theo}\label{thm32}
Let $\Gamma(G)$ be a connected signed graph on $n$ vertices, let $v$ be a cut-point of $\Gamma(G)$ and $\Gamma(G_{1})$ be a component of $\Gamma(G-v)$. If $\eta(\Gamma(G_{1}))=\eta(\Gamma(G_{1}+v))-1$, then $\eta(\Gamma(G))=\eta(\Gamma(G_{1}))+\eta(\Gamma(G-G_{1}))$.
\end{theo}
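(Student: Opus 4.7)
My plan is to convert the hypothesis into a rank equality and then exploit symmetry to simultaneously eliminate rows and columns, in the spirit of the proof of Theorem \ref{thm31}. First, $\eta(\Gamma(G_{1}))=\eta(\Gamma(G_{1}+v))-1$ rewrites as $r(\Gamma(G_{1}+v))=r(\Gamma(G_{1}))$. Writing $M=A[\Gamma(G_{1})]$ and $b=A[\Gamma(G_{1}),v]$, this says that the bordered matrix
$$A(\Gamma(G_{1}+v))=\begin{bmatrix} M & b \\ b^{T} & 0 \end{bmatrix}$$
has the same rank as its principal block $M$, so the last row is a linear combination of the first $|V(G_{1})|$ rows. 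Consequently there exists a vector $x$ with $x^{T}M=b^{T}$ and $x^{T}b=0$; using the symmetry of $M$, this is equivalent to $Mx=b$ and $b^{T}x=0$.

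Next, I exploit the cut-point condition to display $A(\Gamma(G))$ in block form. With $H'=G-G_{1}-v$, $N=A[\Gamma(H')]$, and $c=A[\Gamma(H'),v]$, no edge joins $V(G_{1})$ with $V(H')$, so
$$A(\Gamma(G))=\begin{bmatrix} M & b & 0 \\ b^{T} & 0 & c^{T} \\ 0 & c & N \end{bmatrix}.$$
Subtracting $x^{T}$ times the first block of rows from the middle row turns $(b^{T},0,c^{T})$ into $(0,0,c^{T})$, using $x^{T}M=b^{T}$ and $x^{T}b=0$; applying the symmetric column operation on the $v$-column likewise kills the $(1,2)$ block. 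The resulting matrix
$$\begin{bmatrix} M & 0 & 0 \\ 0 & 0 & c^{T} \\ 0 & c & N \end{bmatrix}$$
is block-diagonal, so its rank equals $r(M)+r(A[\Gamma(G-G_{1})])=r(\Gamma(G_{1}))+r(\Gamma(G-G_{1}))$.

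Since elementary row and column operations preserve rank, $r(\Gamma(G))=r(\Gamma(G_{1}))+r(\Gamma(G-G_{1}))$, and converting back to nullities via $n=\eta+r$ on each summand, together with $|V(G)|=|V(G_{1})|+|V(G-G_{1})|$, yields $\eta(\Gamma(G))=\eta(\Gamma(G_{1}))+\eta(\Gamma(G-G_{1}))$. The step I expect to require the most care is the very first one: producing $x$ satisfying \emph{both} $Mx=b$ \emph{and} $b^{T}x=0$. The equality $r(\Gamma(G_{1}+v))=r(\Gamma(G_{1}))$ only says directly that $b$ lies in the column span of $M$; the additional orthogonality $b^{T}x=0$, forced by the zero in the $(v,v)$ corner of the bordered matrix, is precisely what allows the simultaneous two-sided elimination in the second step. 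After this linear-algebraic point is secured, the remaining work is routine block bookkeeping.
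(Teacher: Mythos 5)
Your proof is correct and takes essentially the same route as the paper's: the same three-block partition of $A(\Gamma(G))$ afforded by the cut-point, the same translation of the hypothesis into $r(\Gamma(G_{1}+v))=r(\Gamma(G_{1}))$, and the same symmetric row/column elimination yielding the block-diagonal matrix $A(\Gamma(G_{1}))\oplus A(\Gamma(G-G_{1}))$, followed by rank-nullity bookkeeping. The only difference is expository: you make explicit the vector $x$ with $Mx=b$ and $b^{T}x=0$, which the paper leaves implicit in the statement that the row $[A[v,\Gamma(G_{1})]\ \ 0]$ is a linear combination of the rows of $A[\Gamma(G_{1}),\Gamma(G_{1}+v)]$.
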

\begin{proof}
Let $A=A(\Gamma(G))$ be the adjacency matrix of $\Gamma(G)$. Then
\begin{gather*}
A=\begin{bmatrix} A[\Gamma(G_{1})] & A[\Gamma(G_{1}),v] & 0 \\ A[v,\Gamma(G_{1})] & 0 & A[v,\Gamma(G-G_{1}-v)] \\ 0 & A[\Gamma(G-G_{1}-v),v] & A[\Gamma(G-G_{1}-v)]
\end{bmatrix}.
\end{gather*}

Because $\eta(\Gamma(G_{1}))=\eta(\Gamma(G_{1}+v))-1$, $r(\Gamma(G_{1}+v))=r(\Gamma(G_{1}))$ and thus the row vector
$A[v,\Gamma(G_{1}+v)]=[A[v,\Gamma(G_{1})] \quad 0]$ is linear combination of the row vectors of $A[\Gamma(G_{1}),\Gamma(G_{1}+v)]$. Similarly, the column vector $A[\Gamma(G_{1}+v),v]$ is linear combination of the column vector of $A[\Gamma(G_{1}+v), \Gamma(G_{1})]$. Thus $B$ can be obtained from $A$ by row and column linear transformations, where
\begin{gather*}
B=\begin{bmatrix} A[\Gamma(G_{1})] & 0 & 0 \\ 0 & 0 & A[v,\Gamma(G-G_{1}-v)] \\ 0 & A[\Gamma(G-G_{1}-v),v] & A[\Gamma(G-G_{1}-v)]
\end{bmatrix}.
\end{gather*}
It's easy to see $B$ is the adjacency matrix of the union of $\Gamma(G_{1})$ and $\Gamma(G-G_{1})$. Then we have $r(A)=r(B)$, which implies that $\eta(\Gamma(G))=n-r(A)=n-r(B)=\eta(B)=\eta(\Gamma(G_{1}))+\eta(\Gamma(G-G_{1}))$.
\end{proof}

\section{The nullity of the bicyclic signed graph $\Gamma(\infty(p,q,l))$}

\hskip.6cm  
Firstly, we introduce  some definitions and notation which will used in the following.

A bicyclic graph is a simple connected graph in which the number of edges equals the number of vertices plus one.
There are two basic bicyclic graphs: $\infty$-graph and $\Theta$-graph. An $\infty$-graph, denoted by
$\infty(p, q, l)$, is obtained from two vertex-disjoint cycles $C_p$ and $C_q$ by connecting one vertex
of $C_p$ and one of $C_q$ with a path $P_l$ of length $l-1$
(in the case of $l=1$, identifying the above two vertices);
and a $\Theta$-graph, denoted by $\Theta(p, q, l)$, is a union of three internally disjoint
paths $P_{p+1}, P_{q+1}, P_{l+1}$  of length $p, q, l$ respectively, with common end vertices, where
$p, q, l\geq 1$ and at most one of them is 1. Observe that any bicyclic graph $G$ is obtained
from an $\infty$-graph or a $\Theta$-graph (possibly) by attaching trees to some of its vertices.


\begin{lem}\label{lem11}{\rm (\cite{20131})}
Let $\Gamma(G)$ be a signed graph containing a pendant vertex, and let $\Gamma(H)$ be the induced subgraph of $\Gamma(G)$ obtained by deleting this pendant vertex together with the vertex adjacent to it. Then $\eta(\Gamma(G))=\eta(\Gamma(H))$.
\end{lem}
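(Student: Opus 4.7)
The plan is to obtain Lemma~\ref{lem11} as a direct application of Theorem~\ref{thm31}. Let $u$ denote the pendant vertex and $v$ its unique neighbor in $\Gamma(G)$. By Lemma~\ref{lem31}, both $\eta(\Gamma(G))$ and $\eta(\Gamma(H))$ split additively over connected components, and the components of $\Gamma(G)$ not containing $u$ coincide with the corresponding components of $\Gamma(H)$, so it suffices to treat the case where $\Gamma(G)$ is connected. If in addition $|V(\Gamma(G))| = 2$, then $\Gamma(G) = \Gamma(K_2)$, whose adjacency matrix is invertible, while $\Gamma(H)$ is the empty graph; both nullities are $0$.

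From now on assume $|V(\Gamma(G))| \geq 3$. Since $u$ is pendant with unique neighbor $v$, the removal of $v$ isolates $u$ from the remainder of the graph, so $v$ is a cut-point and $\Gamma(G-v)$ decomposes as $\Gamma(G_1) \cup \Gamma(G_2) \cup \cdots \cup \Gamma(G_s)$ where $\Gamma(G_1) = \{u\}$ is the isolated vertex and $\Gamma(G_2) \cup \cdots \cup \Gamma(G_s) = \Gamma(G) - u - v = \Gamma(H)$. The single-vertex graph $\Gamma(G_1)$ has adjacency matrix $[0]$, hence $\eta(\Gamma(G_1)) = 1$; while $\Gamma(G_1 + v)$ is the signed edge $uv$, whose adjacency matrix
\[
\begin{pmatrix} 0 & \sigma(uv) \\ \sigma(uv) & 0 \end{pmatrix}
\]
is invertible because $\sigma(uv) = \pm 1$, so $\eta(\Gamma(G_1 + v)) = 0$. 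Thus $\eta(\Gamma(G_1)) = \eta(\Gamma(G_1 + v)) + 1$, the precise hypothesis required by Theorem~\ref{thm31}.

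Applying Theorem~\ref{thm31} and then Lemma~\ref{lem31} to rewrite $\sum_{i=2}^{s}\eta(\Gamma(G_i)) = \eta(\Gamma(H))$, one obtains
\[
\eta(\Gamma(G)) \;=\; \sum_{i=1}^{s}\eta(\Gamma(G_i)) \,-\, 1 \;=\; 1 + \eta(\Gamma(H)) - 1 \;=\; \eta(\Gamma(H)),
\]
which finishes the proof. There is no genuine obstacle here: the only care needed is to dispose of the degenerate $K_2$ case separately, after which the key observation that \emph{the neighbor of a pendant vertex in a larger connected graph is automatically a cut-point} makes Theorem~\ref{thm31} apply verbatim. (An alternative route, avoiding Theorem~\ref{thm31} entirely, is to order the rows/columns of $A(\Gamma(G))$ so that the leading $2 \times 2$ principal block corresponds to $\{u,v\}$ and take a Schur complement; the off-diagonal block paired with $u$ has only one nonzero entry, so the Schur complement reduces cleanly to $A(\Gamma(H))$, giving $r(\Gamma(G)) = r(\Gamma(H)) + 2$ and hence the claim.)
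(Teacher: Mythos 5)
Your proof is correct. A point of comparison first: the paper itself gives no proof of Lemma~\ref{lem11} --- it is imported from \cite{20131}, and it is the signed analogue of a classical pendant-vertex lemma that is traditionally established by elementary row/column operations on the adjacency matrix, i.e.\ essentially the Schur-complement computation you sketch in your closing parenthesis. Your main argument therefore takes a genuinely different, self-contained route: after the two legitimate reductions (to a connected graph via Lemma~\ref{lem31}, and the degenerate case $\Gamma(G)=\Gamma(K_2)$ with $\Gamma(H)$ empty, where both nullities are $0$), you note that the neighbor $v$ of the pendant vertex $u$ is automatically a cut-point of a connected graph on at least three vertices, that the component $\Gamma(G_1)=\{u\}$ of $\Gamma(G-v)$ satisfies $\eta(\Gamma(G_1))=1=\eta(\Gamma(G_1+v))+1$ because the signed edge $uv$ has invertible adjacency matrix (here $\sigma(uv)=\pm1$ is what makes the signed case no harder than the unsigned one), and you then apply Theorem~\ref{thm31} and resum the remaining components into $\eta(\Gamma(H))$. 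All steps check out, and there is no circularity: the proof of Theorem~\ref{thm31} in Section~3 is pure linear algebra and nowhere relies on Lemma~\ref{lem11}. What your route buys is economy within this paper, since the cited lemma becomes a corollary of the paper's own cut-point machinery; what the direct elimination route buys is uniformity, since the $2\times 2$ block on $\{u,v\}$ is invertible and its Schur complement reduces exactly to $A(\Gamma(H))$ (the correction term vanishes because the $u$-column of the off-diagonal block is zero), giving $r(\Gamma(G))=r(\Gamma(H))+2$ in one stroke with no connectivity or cut-point hypotheses and no case split.
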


\begin{theo}\label{thm41}
Let $p,q,l$ be integers with $p, q\geq 3, l\geq 1$ and $G=\infty(p,q,l)$.

{\rm (1) }If $p$ and $q$ are odd, then
$$ \eta(\Gamma(G))=\left\{
\begin{array}{lll}
0,        & {\mbox{if } l \mbox{ is even}; }\\
0,        & {\mbox{if }  l \mbox{ is odd  and } s(\Gamma(C_{p}))-s(\Gamma(C_{q}))+\frac{q-p}{2}\equiv \ 0(\ \bmod \ 2); }\\
1,         & {\mbox{if } l=1  \mbox{ and } s(\Gamma(C_{p}))-s(\Gamma(C_{q}))+\frac{q-p}{2}\equiv \ 1(\ \bmod \ 2); }\\
\geq1,    & {\mbox{if } l(\geq 3) \mbox{ is odd and } s(\Gamma(C_{p}))-s(\Gamma(C_{q}))+\frac{q-p}{2}\equiv \ 1(\ \bmod \ 2). }\\
\end{array} \right. $$

{\rm (2) } If $p$ and $q$ have different parities, without loss of generality, let $p$ be even, then
$$ \eta(\Gamma(G))=\left\{
\begin{array}{lll}
0,      & {\mbox{if }\eta(\Gamma(C_{p}))=0; }\\
1,      & {\mbox{if }\eta(\Gamma(C_{p}))=2. }\\
\end{array} \right. $$

{\rm (3) } If $p$ and $q$ are even, then
$$ \eta(\Gamma(G))=\left\{
\begin{array}{lll}
3,      & {\mbox{if } l\mbox{ is odd}, \eta(\Gamma(C_{p}))=\eta(\Gamma(C_{q}))=2; }\\
1,       & {\mbox{if } l\mbox{ is odd}, \eta(\Gamma(C_{p}))\cdot \eta(\Gamma(C_{q}))=0; }\\
2,          & {\mbox{if } l\mbox{ is even}, \eta(\Gamma(C_{p}))=2 \mbox{ or }\eta(\Gamma(C_{q}))=2; }\\
0,         & {\mbox{if } l\mbox{ is even}, \eta(\Gamma(C_{p}))=\eta(\Gamma(C_{q}))=0. }\\
\end{array} \right. $$
\end{theo}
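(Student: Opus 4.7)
The plan is to prove Theorem~\ref{thm41} by case analysis on the parities of $p,q,l$, using the cut-point formulas (Theorems~\ref{thm31} and \ref{thm32}), the pendant-vertex deletion (Lemma~\ref{lem11}), the nullity of signed cycles and paths (Theorem~\ref{thm22} and Proposition~\ref{prop21}), and the coefficient formula (Corollary~\ref{cor21}). Throughout, let $v_1$ be the vertex of $C_p$ on the connecting path, $w_1$ the corresponding vertex of $C_q$, and $u_1,\ldots,u_{l-2}$ the internal path vertices, so that $n=p+q+l-2$.

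First I would isolate a preliminary reduction: for any signed graph $\Gamma(X)$ with a distinguished vertex $u$, attaching a pendant path of $k\geq 1$ new vertices at $u$ yields a signed graph whose nullity equals $\eta(\Gamma(X))$ if $k$ is even, and $\eta(\Gamma(X-u))$ if $k$ is odd. This follows by iterating Lemma~\ref{lem11} from the free end of the pendant path.

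For cases (2) and (3), at least one of $p,q$ is even; assume $p$ is even and use $v_1$ as a cut-point. The components of $\Gamma(G-v_1)$ are $\Gamma(P_{p-1})$ and $\Gamma(H)$, where $\Gamma(H)$ is $\Gamma(C_q)$ with a pendant path of $l-1$ new vertices attached at $w_1$. Since $p-1$ is odd, $\eta(\Gamma(P_{p-1}))=1$ by Proposition~\ref{prop21}, while $\eta(\Gamma(C_p))\in\{0,2\}$ by Theorem~\ref{thm22}. If $\eta(\Gamma(C_p))=0$, then $\eta(\Gamma(P_{p-1}))=\eta(\Gamma(C_p))+1$ and Theorem~\ref{thm31} yields $\eta(\Gamma(G))=\eta(\Gamma(H))$; if $\eta(\Gamma(C_p))=2$, then $\eta(\Gamma(P_{p-1}))=\eta(\Gamma(C_p))-1$ and Theorem~\ref{thm32} yields $\eta(\Gamma(G))=1+\eta(\Gamma(G-V(P_{p-1})))$. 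In either branch the remaining graph is a cycle with a pendant path, which the preliminary tool collapses to $\eta(\Gamma(C_q))$ or $\eta(\Gamma(P_{q-1}))$ according to the parity of $l$; the stated values in (2) and (3) then follow by reading off Theorem~\ref{thm22} and Proposition~\ref{prop21}.

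Case (1), $p,q$ both odd, is the delicate one: now $\eta(\Gamma(P_{p-1}))=\eta(\Gamma(C_p))=0$ and symmetrically on the $q$-side, so neither cut-point theorem applies. I would instead compute $a_n$ from Corollary~\ref{cor21}. The basic figures covering all $n$ vertices split into three families: all-$K_2$ figures (perfect matchings of $G$); single-cycle figures $C_p\cup M$ or $C_q\cup M$; and two-cycle figures $C_p\cup C_q\cup M$, possible only for $l\geq 2$ when $C_p$ and $C_q$ are vertex-disjoint. A parity check on the remaining vertex counts shows that for $l$ even only the first and third families contribute; a short forcing argument gives $|\mathrm{PM}(G)|=1$, and the two contributions, of magnitudes $1$ and $4$, cannot cancel, so $a_n\neq 0$ and $\eta(\Gamma(G))=0$. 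For $l$ odd, $n$ is odd and the internal path has $l-2$ (odd) vertices, so only the single-cycle figures contribute, giving
\begin{equation*}
a_n = -2\bigl[(-1)^{(q+l-2)/2+s(\Gamma(C_p))} + (-1)^{(p+l-2)/2+s(\Gamma(C_q))}\bigr],
\end{equation*}
which vanishes precisely when $s(\Gamma(C_p))-s(\Gamma(C_q))+(q-p)/2$ is odd. When $a_n\neq 0$ the conclusion $\eta(\Gamma(G))=0$ is immediate; when $a_n=0$ and $l=1$, a further Corollary~\ref{cor21} computation of $a_{n-1}$---dominated by the unique perfect matching of $G-v$ at the shared cut-point $v$, which splits into $P_{p-1}\cup P_{q-1}$---gives $a_{n-1}\neq 0$, pinning $\eta(\Gamma(G))=1$.

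The main obstacle is the subcase $l\geq 3$ odd with the bad parity in case (1): for $l\geq 2$ the two-cycle basic figures $C_p\cup C_q\cup M'$ on $n-1$ vertices now contribute to $a_{n-1}$ alongside the all-$K_2$ terms, and these contributions need not cancel in a controllable way. The coefficient method therefore cannot sharpen $\eta(\Gamma(G))\geq 1$ to an exact value in this subcase, matching the weaker assertion $\eta\geq 1$ in the theorem statement.
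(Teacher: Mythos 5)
Your route is essentially the paper's own: Corollary~\ref{cor21} drives the odd--odd case through $a_n$ (your expression for $a_n$ when $l$ is odd is the paper's formula in disguise, since $-2\,(-1)^{(q+l-2)/2+s(\Gamma(C_p))}=2\,(-1)^{(q+l)/2+s(\Gamma(C_p))}$), and the even cases are handled, exactly as in the paper, by applying Theorem~\ref{thm31} or Theorem~\ref{thm32} at the cut-point $v_1$ according as $\eta(\Gamma(C_p))=0$ or $2$, then collapsing the pendant path via Lemma~\ref{lem11}. You in fact make explicit several steps the paper waves at: the uniqueness of the matchings behind the magnitude-$1$ versus magnitude-$4$ non-cancellation when $l$ is even, the reason $a_{n-1}\neq 0$ when $l=1$ (better than ``dominated by'': every figure in $\mathcal{U}_{n-1}$ is an all-$K_2$ figure carrying the common sign $(-1)^{(n-1)/2}$, single-cycle figures being excluded by parity and two-cycle figures by the shared vertex, so no cancellation is possible), and why the coefficient method cannot improve $\eta\geq 1$ to an exact value for odd $l\geq 3$, which the paper likewise leaves as an inequality.

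One off-by-one slip needs repair: the second component of $\Gamma(G-v_1)$ is $\Gamma(C_q)$ with a pendant path of $l-2$ new vertices, not $l-1$; the length $l-1$ is correct only in the Theorem~\ref{thm32} branch, where you delete $V(P_{p-1})$ and the vertex $v_1$ survives. If one takes your single length $l-1$ literally in both branches, the parity-of-$l$ dichotomy in the Theorem~\ref{thm31} branch comes out swapped: for $l$ odd and $\eta(\Gamma(C_p))=0$ it would give $\eta(\Gamma(G))=\eta(\Gamma(C_q))\in\{0,2\}$, contradicting the value $1$ asserted in part (3) for $l$ odd with $\eta(\Gamma(C_p))\cdot\eta(\Gamma(C_q))=0$; the correct collapse there is to $\Gamma(P_{q-1})$, whose nullity is $1$ since $q$ is even. (The slip is invisible in part (2) only because $q$ odd makes $\eta(\Gamma(C_q))=\eta(\Gamma(P_{q-1}))=0$.) With the two branch lengths $l-2$ and $l-1$ distinguished, your reductions give $\eta(\Gamma(C_q))$ ($l$ even) or $\eta(\Gamma(P_{q-1}))$ ($l$ odd) in the Theorem~\ref{thm31} branch, and $1+\eta(\Gamma(C_q))$ ($l$ odd) or $1+\eta(\Gamma(P_{q-1}))$ ($l$ even) in the Theorem~\ref{thm32} branch, which is precisely the paper's table.
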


\begin{proof}
{\bf Case 1: } Both $p$ and $q$ are odd.

By Corollary \ref{cor21}, we know
$$a_{i}=\sum_{\Gamma(U)\in\Gamma(\mathcal{U}_{i})} (-1)^{p(\Gamma(U))+s(\Gamma(U))}\cdot2^{c(\Gamma(U))}\quad (i=1,2,\cdots £¬n).$$

{\bf Subcase 1.1: } $l$ is even.
\begin{eqnarray*}
a_{n}& = & (-1)^{s(\Gamma(C_{p}))+s(\Gamma(C_{q}))+\frac{l+2}{2}}\times 2^{2} +(-1)^{\frac{p-1}{2}+\frac{q-1}{2}+\frac{l}{2}}\times 2^{0}\neq 0.
\end{eqnarray*}

\noindent So $\eta(\Gamma(G))=0$.

{\bf Subcase 1.2: } $l$ is odd.
\begin{eqnarray*}
a_{n}= (-1)^{s(\Gamma(C_{p}))+\frac{l+q}{2}}\times 2^{1} +(-1)^{s(\Gamma(C_{q}))+\frac{l+p}{2}}\times 2^{1}
\end{eqnarray*}

\hskip3.5cm $ \left\{
\begin{array}{lll}
=0,     & {\mbox{if } s (\Gamma(C_{p}))-s(\Gamma(C_{q}))+\frac{q-p}{2}\equiv \ 1(\ \bmod \ 2); }\\
\neq0,   & {\mbox{if } s (\Gamma(C_{p}))-s(\Gamma(C_{q}))+\frac{q-p}{2}\equiv \ 0(\ \bmod \ 2). }\\
\end{array} \right. $

\vskip.2cm

 Clearly, if $a_{n}\neq0$, $\eta(\Gamma(G))=0$; $a_{n}=0$, $\eta(\Gamma(G))\geq1$.
It is obvious that $a_{n-1}\not=0$ when $l=1$, thus   $\eta(\Gamma(G))=1$ when $l=1$.

{\bf Case 2: }  $p$ is even.

      Let $v$ be the vertex of $\Gamma(G)$ joining $\Gamma(C_{p})$ and $\Gamma(P_{l})$, then $v$ is a cut-point of $\Gamma(G)$. Note that $\eta(\Gamma(C_{p}-v))=\eta(\Gamma(P_{p-1}))=1$ by Proposition \ref{prop21} and $\eta(\Gamma(C_{p}))=2$ or $0$ by Theorem \ref{thm22}.

{\bf Subcase 2.1: } $\eta(\Gamma(C_{p}))=0$.

It's clear that $\eta(\Gamma(C_{p}-v))= \eta(\Gamma(C_{p}))+1$. Then by Theorem \ref{thm31} and Lemma \ref{lem11},

 $\eta(\Gamma(G))=\eta(\Gamma(G-v))-1$

 \hskip1.6cm $=\eta(\Gamma(C_{p}-v))+\eta(\Gamma(G-C_{p}))-1$

\hskip1.6cm $ =\eta(\Gamma(G-C_{p}))$

\hskip1.6cm $=\left\{\begin{array}{ll}
\eta(\Gamma(P_{q-1})),      & {\mbox{if } l \mbox{ is odd}; }\\
\eta(\Gamma(C_{q})),        & {\mbox{if } l \mbox{ is even}. }
\end{array} \right. $

\vskip.2cm

{\bf Subcase 2.2: } $\eta(\Gamma(C_{p}))=2$.

It's clear that $\eta(\Gamma(C_{p}-v))= \eta(\Gamma(C_{p}))-1$. Then by Theorem \ref{thm32} and Lemma \ref{lem11},

$\eta(\Gamma(G))=\eta(\Gamma(C_{p}-v))+\eta(\Gamma(G-C_{p}+v))$

\hskip1.6cm $=1+\eta(\Gamma(G-C_{p}+v))$

\hskip1.6cm $=\left\{
\begin{array}{llll}
1+\eta(\Gamma(C_{q})),         & {\mbox{if } l \mbox{ is odd}; }\\
1+\eta(\Gamma(P_{q-1})),       & {\mbox{if } l\mbox{ is even}. }
\end{array} \right. $

By Theorem \ref{thm22}, $ \eta(\Gamma(C_{q}))=\left\{
\begin{array}{lll}
0,                        & {\mbox{if } q \mbox{ is odd;}}\\
0 \mbox{ or }2,           & {\mbox{if }q \mbox{ is even.}}\\
\end{array} \right. $ Then by Proposition \ref{prop21} and above arguments,  (2) and (3) hold.
\end{proof}

%
%
%
%
%


\section{The nullity set of bicyclic signed graphs}
\hskip.6cm Denoted by $\mathcal{B}_n$ the set of all bicyclic graphs on $n$ vertices.
Obviously, $\mathcal{B}_n$ consists of three types of graphs:
first type denoted by $B^+_n$ is the set of those graphs each of which is an $\infty$-graph, $\infty(p,q,l)$,  with trees attached when $l>1$;
second type denoted by $B^{++}_n$ is the set of those graphs each of which is an $\infty$-graph, $\infty(p,q,l)$,  with trees attached when $l=1$;
third type denoted by $\Theta_n$ is the set of those graphs each of which is a $\Theta$-graph, $\Theta(p,q,l)$,  with trees attached.
Then $\mathcal{B}_n=B_n^+\cup B_n^{++}\cup \Theta_n.$

 Let $\Gamma(\mathcal{B}_n)$ be the set of all bicyclic signed graphs on $n$ vertices.
 Clearly, $\Gamma(\mathcal{B}_n)=\Gamma(B_n^+)\cup \Gamma(B_n^{++})\cup \Gamma(\Theta_n).$


Let $\Gamma(G)=(G,\sigma)$ be a signed graph on $n$ vertices. Suppose $\theta: V(G)\longrightarrow\{+,-\}$ is a sign function.
Switching $\Gamma(G)$ by $\theta$ means forming a new signed graph $\Gamma(G)^{\theta}=(G, \sigma^{\theta})$
 whose underlying graph is the same as $G$, but whose sign function is defined on
an edge $uv$ by $\sigma^{\theta}(uv)=\theta(u)\sigma(uv)\theta(v)$.
Note that switching does not change the signs or balanceness of the cycles of $\Gamma(G)$.
If we define a diagonal signature matrix $D^{\theta}=diag(d_1, d_2, \ldots, d_n)$ with $d_i=\theta(v_i)$ for each $v_i\in V(G)$, then
$A(\Gamma(G)^{\theta})=D^\theta A(\Gamma(G))D^{\theta}.$
Two graphs $\Gamma_1(G), \Gamma_2(G)$ are called switching equivalent, denoted by $\Gamma_1(G)\sim\Gamma_2(G)$, if there
exists a switching function $\theta$ such that $\Gamma_2(G)=\Gamma_1^{\theta}(G)$,  or equivalently, $A(\Gamma_2(G))=D^\theta A(\Gamma_1(G))D^{\theta}.$

\begin{theo}\label{thm11}{\rm (\cite{2003})}
Let $\Gamma(G)=(G,\sigma)$ be a signed graph. Then $\Gamma(G)$ is balanced if and only if $\Gamma(G)=(G, \sigma)\sim (G,+).$
\end{theo}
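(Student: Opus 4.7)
The plan is to prove the two implications separately, using the fact already noted in the text that switching preserves the sign of every cycle.

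For the sufficient direction, suppose $\Gamma(G)\sim (G,+)$. Since switching does not change the sign of any cycle, and every cycle of $(G,+)$ is trivially positive, every cycle of $\Gamma(G)$ must also be positive. Hence $\Gamma(G)$ is balanced. This part is essentially a one-line consequence of the invariance property recorded in the paragraph preceding the theorem.

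For the necessary direction, suppose $\Gamma(G)$ is balanced. I would first reduce to the case that $G$ is connected, since a signed graph is balanced iff each of its components is balanced, and a switching function can be chosen independently on each component. Assuming $G$ connected, fix a spanning tree $T$ of $G$ and a root $v_0\in V(G)$. For each vertex $v$, let $P_v$ be the unique $v_0$--$v$ path in $T$, and define the switching function
\begin{equation*}
\theta(v)=\prod_{e\in E(P_v)}\sigma(e),
\end{equation*}
with the convention $\theta(v_0)=+$. By the defining formula $\sigma^\theta(uv)=\theta(u)\sigma(uv)\theta(v)$, a straightforward induction on the distance from $v_0$ in $T$ shows that every tree edge $uv\in E(T)$ satisfies $\sigma^\theta(uv)=+$.

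The main obstacle is the non-tree edges. For any non-tree edge $e=uv$, the graph $T+e$ contains a unique cycle $C_e$, and I would compute $\sigma^\theta(e)$ as the product of $\theta(u)$, $\sigma(uv)$, and $\theta(v)$, and then rewrite this product along $C_e$ using the definition of $\theta$. After cancellations (each tree edge on the $v_0$--$u$ and $v_0$--$v$ paths that does not lie on $C_e$ appears twice and contributes $+1$), the expression reduces to $\sgn(C_e)$ times a sign coming from the tree edges of $C_e$ already switched to $+$; since $\Gamma(G)$ is balanced we have $\sgn(C_e)=+$, forcing $\sigma^\theta(e)=+$. Together with the tree-edge computation, this yields $\Gamma(G)^\theta=(G,+)$, completing the proof. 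The only delicate point is organizing the product over $C_e$ cleanly so that the cancellations are transparent; everything else is bookkeeping.
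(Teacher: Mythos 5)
Your proof is correct, but there is nothing in the paper to compare it against: Theorem \ref{thm11} is quoted from Hou and Li \cite{2003} and the paper supplies no proof of its own, using the statement only to reduce the balanced case to the underlying graph. What you have written is the classical spanning-tree switching argument (Harary's balance criterion, in the switching formulation), which is also how the result is proved in the literature the paper cites, so your route is the standard one rather than a novel alternative. Two points of streamlining. First, no induction is needed for the tree edges: if $u$ is the parent of $v$ in the tree rooted at $v_0$, then $P_v=P_u+uv$ gives $\theta(v)=\theta(u)\sigma(uv)$, whence $\sigma^{\theta}(uv)=\theta(u)\sigma(uv)\theta(v)=\theta(u)^2\sigma(uv)^2=+$ immediately. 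Second, your description of the non-tree edge computation is slightly muddled: letting $w$ be the last common vertex of $P_u$ and $P_v$, the edges of the $v_0$--$w$ segment appear twice in $\theta(u)\sigma(uv)\theta(v)$ and cancel, leaving exactly
\begin{equation*}
\sigma^{\theta}(uv)=\prod_{f\in E(C_e)}\sigma(f)=\sgn(\Gamma(C_e)),
\end{equation*}
with no residual factor ``from the tree edges already switched to $+$'' --- the computation takes place entirely in the original signature $\sigma$, and the tree-edge normalization plays no role in it. With balance giving $\sgn(\Gamma(C_e))=+$, the conclusion $\Gamma(G)^{\theta}=(G,+)$ follows, and your componentwise reduction and the easy converse (switching preserves cycle signs) are both fine.
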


Note that switching equivalence is a relation of equivalence, and two switching equivalent graphs
have the same nullity. Therefore, when we discuss the nullity of signed graphs, we can choose an arbitrary
representative of each switching equivalent class. 
If a  signed graph is balanced, by Theorem \ref{thm11}, it is switching
equivalent to one with all edges positive, that is, the underlying graph. Thus we only need to consider the case of unbalanced.

Liu et al.\cite{2008a}, Chang et al.\cite{2008c} characterize the maximal nullity of bicyclic graphs and  determine the the nullity set of $\mathcal{B}_n$.
Recently, Fan et al.\cite{20132} characterize the maximal, the second maximal nullity of bicyclic signed graphs.

\begin{theo}\label{thm110}{\rm (\cite{2008c})} Let $n$ be a positive integer, $[0,n]=\{0,1,2,\ldots, n\}$. Then

{\rm (1) }  Let $n\geq 7$, the nullity set  of $B_n^+$ is $[0,n-6].$

{\rm (2) } Let $n\geq 8$, the nullity set  of $B_n^{++}$ is $[0,n-6].$

{\rm (3) } Let $n\geq 6$, the nullity set  of $\Theta_n$ is $[0,n-4].$
\end{theo}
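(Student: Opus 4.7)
The plan is to prove each of the three parts by establishing both an upper bound on $\eta(G)$ for $G$ in the respective class and the achievability of every integer value in the stated nullity set. For the upper bound, the key tool is Lemma \ref{lem11}: iteratively delete pendant-neighbor pairs from $G$ without changing the nullity. Each such step removes two vertices, so after all possible reductions we obtain a graph $G^{\ast}$ with $n^{\ast}\leq n$ vertices, no pendant, and $\eta(G^{\ast})=\eta(G)$. A structural analysis shows that $G^{\ast}$ still contains both cycles of $G$, so $n^{\ast}\geq 6$ for $G\in B_n^+$ (the extremal configuration being $\infty(3,3,2)$), $n^{\ast}\geq 5$ for $G\in B_n^{++}$ (extremal $\infty(3,3,1)$, two triangles sharing a vertex), and $n^{\ast}\geq 4$ for $G\in \Theta_n$ (extremal $\Theta(1,2,2)=K_4-e$). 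The nullity of $G^{\ast}$ is then bounded above using Theorem \ref{thm41} specialized to all-positive signs in the $\infty$-case, together with analogous computations via Theorems \ref{thm31} and \ref{thm32} in the $\Theta$-case. Tracking the exact gain in nullity from each possible tree attachment yields $\eta(G)\leq n-6$ (resp.\ $n-4$).

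For achievability, the idea is to build, for each $k$ in the claimed range, an explicit graph $G_k$ in the class with $\eta(G_k)=k$. Two construction primitives suffice: (a) appending a pendant $K_2$ at any vertex, which by Lemma \ref{lem11} leaves the nullity unchanged while adding two vertices, and (b) substituting a path in the skeleton (or attaching one as a branch) with a longer one of prescribed parity, which shifts the nullity by a controlled amount via Proposition \ref{prop21} and Theorem \ref{thm41}. Seed graphs with small nullities $k_0\in\{0,1,2,3\}$ are read off directly from Theorem \ref{thm41}: for instance, $\infty(3,3,2)$ has $\eta=0$, $\infty(3,4,2)$ has $\eta=1$, and $\infty(4,4,2)$ has $\eta=2$. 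Starting from an appropriate seed in the desired subclass, one attaches enough pendant $K_2$'s (which preserve nullity) together with judiciously chosen longer paths (which bump it by $1$) to reach exactly $n$ vertices and target nullity $k$.

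The main obstacle is verifying tightness of the upper bound: one must check that no bicyclic configuration can push the nullity beyond $n-6$ (or $n-4$), which requires not just the Theorem \ref{thm41}-type identities for the skeleton, but also control over nontrivial tree attachments that may not admit immediate pendant reduction (for example trees with several internal branch points). This in turn depends on a careful application of Theorems \ref{thm31} and \ref{thm32} at each cut-point to compare $\eta(G_1)$ with $\eta(G_1+v)$ and thus recursively reduce the nullity computation. A secondary difficulty lies in covering \emph{every} integer in $[0,n-6]$ (or $[0,n-4]$) in the construction step: the parity constraints imposed by Theorem \ref{thm41} on the cycle and path lengths mean the construction naturally splits into subcases, and one must verify that increments of $1$ in $k$ are achievable in each subcase rather than only every second value.
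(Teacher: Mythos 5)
First, a framing point: the paper does \emph{not} prove Theorem \ref{thm110} at all --- it is quoted from \cite{2008c} --- so the fair comparison is with the paper's proofs of the signed analogues, Theorems \ref{thm51}--\ref{thm56}, which follow the same template as \cite{2008c}. Measured against that template, your upper-bound argument has a genuine error. The pendant reduction of Lemma \ref{lem11} does \emph{not} preserve the two cycles: if a leaf hangs at a vertex $v$ lying on a cycle, then every deletion order eventually removes $v$ (either paired with that leaf or with another), destroying the cycle. Indeed, in the very graphs attaining the extremal nullity (e.g.\ $G_1$ of Fig.~2 read with all edges positive), the reduction terminates in $P_2\cup C_4\cup(n-8)K_1$: neither original cycle survives, and almost all of the nullity sits on isolated vertices. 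So your claims that the reduced graph $G^{\ast}$ contains both cycles, that $n^{\ast}\geq 6$, and that $\eta(G^{\ast})$ can then be bounded via Theorem \ref{thm41}, all collapse together. The working mechanism, used in Theorems \ref{thm51} and \ref{thm53} and in \cite{2008c}, is rank monotonicity for induced subgraphs: one exhibits inside $G$ an induced subgraph $H$ from a finite catalogue ($H_1,H_2,H_3$ of Fig.~1, resp.\ $H_4,\dots,H_{12}$ of Fig.~3) with $r(H)\geq 6$, and concludes $\eta(G)=n-r(G)\leq n-r(H)\leq n-6$ since $A(H)$ is a principal submatrix of $A(G)$. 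Note also that for $B_n^{++}$ the bare skeleton $\infty(3,3,1)$ has rank only $5$, which is exactly why the catalogue on $6$--$8$ vertices is needed there; your ``$n^{\ast}\geq 5$'' would in any case be too weak to yield $n-6$.

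Your achievability step has a second, independent gap: the two primitives you allow cannot reach large $k$. Appending a pendant $K_2$ leaves the nullity unchanged, and paths --- whether substituted in the skeleton or attached as branches --- change it by only a bounded amount: in every case Theorem \ref{thm41} resolves, the tree-free $\infty$-skeleton has nullity at most $3$, and an attached pendant path is swallowed whole by the reduction of Lemma \ref{lem11} without ever leaving isolated vertices behind. Hence, starting from seeds with $\eta\in\{0,1,2,3\}$, your moves keep the nullity bounded independently of $n$, so values such as $k=n-6$ are unreachable; reaching nullity near $n$ forces many coincident leaves at a single vertex after reduction. The missing device is the \emph{star}: attaching $S_{k+2}$ at a suitable vertex, one application of Lemma \ref{lem11} removes the star's center together with one leaf and strands $k$ isolated vertices, each contributing $1$ to the nullity by Lemma \ref{lem31}. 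This is precisely how $G_1,G_2$ (Fig.~2), $G_3,G_4$ (Figs.~4--5) and $G_7,G_8$ (Fig.~6) are designed in the proofs of Theorems \ref{thm52}, \ref{thm54} and \ref{thm56}, with the companion path $P_{n-k-7}$ (resp.\ $P_{n-k-5}$) absorbing the parity of $n-k$ --- which also disposes of your worry about hitting every integer in the interval rather than every second one. Running those same constructions with all edges positive (choosing cycle lengths so that, e.g., the quadrangle has $\eta(C_4)=2$ by Theorem \ref{thm22}) yields the unsigned witnesses that Theorem \ref{thm110} requires.
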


In Subsection 5.1-Subsection 5.3,  we firstly obtain an upper bound of the nullity of bicyclic signed graphs in $\Gamma(B_n^+)$ and $\Gamma(B_n^{++})$,
then we obtain the nullity set of unbalanced bicyclic signed graphs in $\Gamma(B_n^+)$, $\Gamma(B_n^{++})$, $\Gamma(\Theta_n)$, respectively,
 and determine the nullity set of (unbalanced) bicyclic signed graphs.

\subsection{The nullity set of unbalanced  bicyclic signed graphs in $\Gamma(B_{n}^{+})$}

\begin{theo}\label{thm51}
Let $n\geq7$, $\Gamma(G)\in\Gamma(B_{n}^{+})$. Then $\eta(\Gamma(G))\leq n-6 $.
\end{theo}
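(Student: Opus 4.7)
I propose to prove the bound by induction on $n \ge 7$, reducing via pendants (Lemma \ref{lem11}) whenever available, invoking Theorem \ref{thm41} at the base of the induction when $G$ is the bare $\infty$-graph, and invoking Theorems \ref{thm31} and \ref{thm32} to handle pendant neighbors that lie on the $\infty$-subgraph.

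For the base $n = 7$, the members of $\Gamma(B_7^+)$ are the bare signed $\infty(p, q, l)$ with $p + q + l = 9$ (namely $\infty(3,3,3)$, $\infty(3,4,2)$, $\infty(3,5,2)$, and their symmetric variants), together with $\infty(3,3,2)$ carrying a single pendant. Each case yields $\eta(\Gamma(G)) \le 1 = n-6$ by Theorem \ref{thm41} applied directly, or by one pass of Lemma \ref{lem11} followed by a direct check on a graph with at most five vertices.

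For the inductive step let $\Gamma(G) \in \Gamma(B_n^+)$ with $n > 7$. If some attached tree has two or more vertices, pick a pendant $u$ whose neighbor $w$ lies inside that tree; Lemma \ref{lem11} removes the pair and leaves a signed graph still in $\Gamma(B_{n-2}^+)$, so the inductive hypothesis gives $\eta(\Gamma(G)) \le (n-2) - 6 < n-6$. If every attached tree is a single pendant edge, iterating Lemma \ref{lem11} (together with the component formula Lemma \ref{lem31}) either reduces $\Gamma(G)$ to a bare $\infty$-graph in $B^+$ (treated below) or to a disjoint union of a signed unicyclic graph with some acyclic pieces, for which a parallel nullity bound plus a short arithmetic argument gives $\eta(\Gamma(G)) \le n-6$. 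When $G$ has no pendant, $G = \infty(p, q, l)$ itself with $l \ge 2$ and $n = p+q+l-2$; Theorem \ref{thm41} covers case (2) via $\eta \le 1$ and $n \ge 7$, case (3) via $\eta \le 3$ (with the borderline $n = 8$ forcing $l = 2$ and hence $\eta \le 2$), and the easy sub-situations of case (1) via $\eta = 0$. The only residual sub-situation is case (1) with $l \ge 3$ odd and $s(\Gamma(C_p)) - s(\Gamma(C_q)) + \frac{q-p}{2} \equiv 1 \pmod 2$, where Theorem \ref{thm41} supplies only the lower bound $\eta \ge 1$.

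To handle this residual sub-situation, pick any interior vertex $v$ of the path $P_l$ (possible since $l \ge 3$); then $v$ is a cut-point and $\Gamma(G - v)$ splits into two signed lollipops $\Gamma(L_1), \Gamma(L_2)$, each an odd cycle ($C_p$ or $C_q$) with a (possibly empty) tail. Iterating Lemma \ref{lem11} along each tail, combined with Lemma \ref{lem21} (for odd cycles) and Proposition \ref{prop21} (for paths), produces $\eta(\Gamma(L_i)) = \eta(\Gamma(L_i + v)) = 0$ for $i = 1, 2$; by Lemma \ref{lem31} we then have $\eta(\Gamma(G - v)) = 0$. Since the adjacency matrix of a signed graph is real symmetric, the classical interlacing inequality under deletion of a single row and column gives $\eta(\Gamma(G)) \le \eta(\Gamma(G - v)) + 1 = 1$, whence $\eta(\Gamma(G)) \le 1 \le n - 6$ (using $n \ge 7$). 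The principal obstacle throughout is precisely this residual sub-situation, whose upper bound is not delivered by Theorem \ref{thm41}; the main technical content of the argument consists of the lollipop nullity computations and the interlacing step that together pin $\eta(\Gamma(G))$ to $1$.
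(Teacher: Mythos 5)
Your induction does not close, and the gaps sit exactly where the theorem's content lies. In the step where ``some attached tree has two or more vertices,'' deleting the pendant $u$ together with its neighbor $w$ by Lemma \ref{lem11} does \emph{not} in general leave a graph in $\Gamma(B_{n-2}^{+})$: if $w$ is a branching vertex of the tree, $G-u-w$ is disconnected (a smaller bicyclic graph plus a nonempty forest), and if the attached tree is a star whose leaves all hang at the attachment vertex $x$ of the $\infty$-graph, then the only available neighbor is $w=x$ itself, so the deletion destroys a cycle and the resulting graph is not bicyclic at all --- your dichotomy between ``tree with $\geq 2$ vertices'' and ``single pendant edges'' misfires on precisely this configuration. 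These defects are repairable (via Lemma \ref{lem31} and bookkeeping for the forest components), but the second case is worse: when every pendant sits directly on a cycle or path vertex, iterating Lemma \ref{lem11} opens cycles and scatters the graph into unicyclic-with-trees pieces and acyclic pieces, and you dispose of this with ``a parallel nullity bound plus a short arithmetic argument'' that is never stated or proved. That unproven placeholder is the whole difficulty: bounding the nullity of what remains after the reductions is equivalent in substance to the theorem itself, so as written the induction is circular at its hardest point. (Smaller slips: $\infty(3,5,2)$ has $8$ vertices, not $7$, so it does not belong to your base case; and for $\Gamma(\infty(3,3,3))$ with the odd parity condition, Theorem \ref{thm41}(1) supplies only $\eta\geq 1$, so your claim that the base case follows ``by Theorem \ref{thm41} applied directly'' is false there --- it needs your interlacing argument.)

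By contrast, your treatment of the residual bare case is correct and is the one genuinely sound novelty of the proposal: for $\Gamma(\infty(p,q,l))$ with $p,q$ odd and $l\geq 3$ odd, deleting an interior path vertex $v$ leaves two odd-cycle lollipops, each of nullity $0$ by Lemma \ref{lem11}, Lemma \ref{lem21} and Proposition \ref{prop21}, and Cauchy interlacing then pins $\eta(\Gamma(G))\leq 1$; this even sharpens the ``$\geq 1$'' entry of Theorem \ref{thm41}(1) to equality, though note that interlacing is imported from outside the paper's toolkit (Theorems \ref{thm31} and \ref{thm32} do not apply here, since $\eta(\Gamma(L_i))=\eta(\Gamma(L_i+v))=0$ satisfies neither hypothesis). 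The paper avoids induction and all of the above bookkeeping entirely: it exhibits in every case an induced subgraph of rank at least $6$ --- the whole graph $\infty(4,4,l)$ when $p=q=4$, a six-vertex graph $H_1$ or $H_2$ when a triangle is present, or the lollipop $H_3$ on $p+1\geq 6$ vertices when $p\geq 5$ --- computes its nullity by Lemma \ref{lem11}, Theorem \ref{thm22} and Proposition \ref{prop21}, and concludes $r(\Gamma(G))\geq r(\Gamma(H))\geq 6$ from monotonicity of rank under principal submatrices, whence $\eta(\Gamma(G))\leq n-6$ in one stroke. If you want to salvage your route, you must actually prove the unicyclic/forest nullity bound you invoke; otherwise adopt the induced-subgraph rank argument, which requires no case analysis on where the trees are attached.
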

\begin{proof}
Let $G\in B_{n}^{+}$ be a bicyclic graph with trees attached on a $\infty$-graph, $\infty(p,q,l)$, where $p,q\geq 3, l\geq 2$.

{\bf Case 1: } $p,q\in\{3,4\}$.

{\bf Subcase 1.1: } $p=q=4$.

Note that $p+q+l-2=6+l\geq \left\{
\begin{array}{lll}
9,           & {\mbox{if } l \mbox{ is odd;}}\\
8,           & {\mbox{if }l \mbox{ is even.}}\\
\end{array} \right.$ Then  by (3) of Theorem \ref{thm41}, $r(\Gamma(\infty(4,4,l)))\geq 6$.

Clearly, $\infty(4,4,l)$ is an induced subgraph of $G$, then  $r(\Gamma(G))\geq r(\Gamma(\infty(4,4,l)))\geq 6$. Therefore $\eta(\Gamma(G))\leq n-6$.

{\bf Subcase 1.2: } $p\not=4$ or $q\not=4$.

In this case, there must exist a graph $H$ on $6$ vertices as an induced subgraph of $G$, where  $H=H_{1}$ or $H=H_{2}$ shown in Fig.1.
By Lemma \ref{lem11} repeatedly we obtain $\eta(\Gamma(H_{1}))=\eta(\Gamma(H_{2}))=0$, then $r(\Gamma(H_{1}))=r(\Gamma(H_{2}))=6$.
 Thus  $r(\Gamma(G))\geq r(\Gamma(H))\geq 6$ and $\eta(\Gamma(G))\leq n-6$.


\begin{picture}(300,100)

\put(20,40){\circle*{2}}
\put(40,80){\circle*{2}}
\put(60,40){\circle*{2}}
\put(80,40){\circle*{2}}
\put(100,40){\circle*{2}}
\put(120,40){\circle*{2}}

\put(20,40){\line(1,0){40}}
\put(20,40){\line(1,2){20}}
\put(40,80){\line(1,-2){20}}
\put(60,40){\line(1,0){40}}
\put(80,40){\line(1,0){40}}
\put(100,40){\line(1,0){20}}

\put(160,40){\circle*{2}}
\put(180,80){\circle*{2}}
\put(200,40){\circle*{2}}
\put(200,80){\circle*{2}}
\put(220,40){\circle*{2}}
\put(240,40){\circle*{2}}

\put(160,40){\line(1,0){40}}
\put(160,40){\line(1,2){20}}
\put(180,80){\line(1,-2){20}}
\put(200,40){\line(1,0){40}}
\put(200,40){\line(0,1){40}}
\put(220,40){\line(1,0){20}}

\put(300,60){\circle{50}}
\put(320,60){\circle*{2}}
\put(360,60){\circle*{2}}

\put(320,60){\line(1,0){40}}

\put(60,20){$H_{1}$}
\put(200,20){$H_{2}$}

\put(290,50){$C_{p}$}
\put(300,20){$H_{3}$}
\put(180,-5){Fig.1}
\end{picture}

{\bf Case 2: } $p\geq5$ or $q\geq5$.

Without loss of generality, we assume that $p\geq5$. There must exist a graph $H_{3}$ on $p+1$ vertices shown in Fig.1 as an induced subgraph of $G$.
By Lemma \ref{lem11} and Propsition \ref{prop21}, it is easy to check that
$ \eta(\Gamma(H_{3}))=\left\{
\begin{array}{lll}
0,      & {\mbox{if } p \mbox{ is odd;}}\\
1,        & {\mbox{if } p \mbox{ is even.}}\\
\end{array} \right. $
Hence
$$ r(\Gamma(H_3))=\left\{
\begin{array}{lll}
p+1,        & {\mbox{if } p \mbox{ is odd;}}\\
p,         & {\mbox{if } p \mbox{ is even.}}\\
\end{array} \right. $$
Since $p\geq5$, $r(\Gamma(H_{3}))\geq6$. Then $r(\Gamma(G))\geq r(\Gamma(H_{3}))\geq6$. Thus $\eta(\Gamma(G))\leq n-6$.
\end{proof}

\begin{theo}\label{thm52}
Let $n\geq7$. Then the nullity set of unbalanced bicyclic signed graphs in $\Gamma(B_{n}^{+})$ is $[0,n-6]$.
\end{theo}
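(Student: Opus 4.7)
The upper bound $\eta(\Gamma(G)) \le n - 6$ is Theorem~\ref{thm51}, so the remaining task is to exhibit, for every $k \in \{0, 1, \ldots, n-6\}$, an unbalanced signed graph $\Gamma(G_{k}) \in \Gamma(B_{n}^{+})$ with $\eta(\Gamma(G_{k})) = k$. Every member of $B_{n}^{+}$ arises as $\infty(p, q, l)$ with $l \ge 2$, with trees possibly attached, so each $G_{k}$ will consist of a small unbalanced $\infty$-backbone together with pendant subtrees whose role is to reach both the prescribed vertex count $n$ and the prescribed nullity $k$.

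For $k = 0$, the plan is to take $G_{0} = \infty(3, 3, n - 4)$: when $n$ is even, the parameter $l = n - 4$ is even and Theorem~\ref{thm41}(1) gives $\eta = 0$ after placing a single negative edge on one triangle; when $n$ is odd, $l$ is odd, so I would put a single negative edge on each triangle so that $s(\Gamma(C_{p})) - s(\Gamma(C_{q})) + \frac{q - p}{2} \equiv 0 \pmod{2}$, and Theorem~\ref{thm41}(1) again returns $\eta = 0$. In both cases at least one triangle carries an odd number of negative edges, so the graph is unbalanced.

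For $k \ge 1$, the strategy is to keep an $\infty$-backbone of small nullity and attach two kinds of pendant building blocks, each of whose effect on the nullity can be read off by a single application of Lemma~\ref{lem11}. The first block is a pendant edge $P_{2}$ at a chosen vertex $w$, which adds two vertices without changing the nullity. The second is a pendant star $K_{1,t}$ joined at its center $u$ to a vertex $w$ of the current graph; one application of Lemma~\ref{lem11} to a leaf of the star reduces the new graph to the current graph together with $t - 1$ isolated vertices, so this move adds $t + 1$ vertices and raises the nullity by exactly $t - 1$. Combining one star that supplies the bulk of the desired $k$ with enough pendant edges to fill the vertex budget realizes most values in $[0, n - 6]$; the few boundary values of $k$ are handled by minor variants (using the backbone $\infty(3, 3, 3)$ instead of $\infty(3, 3, 2)$, or by attaching pendants directly to a vertex $w$ of the backbone whose $\eta(\Gamma(G - w))$ is known from Proposition~\ref{prop21} or Theorem~\ref{thm22}). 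In every case the $\infty$-backbone and its signs are preserved, so $G_{k} \in B_{n}^{+}$ and $\Gamma(G_{k})$ is unbalanced throughout.

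The main obstacle will be the parity bookkeeping. Theorem~\ref{thm41} gives the backbone nullity in terms of the parities of $p, q, l$ and of $s(\Gamma(C_{p})), s(\Gamma(C_{q}))$, and these parities must simultaneously be compatible with (a) membership in $B_{n}^{+}$, i.e.\ $l \ge 2$, (b) unbalancedness of at least one cycle, and (c) the requirement that the attached pendant trees push the nullity to \emph{exactly} $k$, not merely $\ge k$ or $\le k$. Splitting the argument along $n \bmod 2$ and $k \bmod 2$, and adjusting among $l \in \{2, 3\}$ and the parity of $t$ in the pendant star accordingly, should close all cases; each final nullity computation then reduces, by iterated application of Lemma~\ref{lem11} to peel off pendants, back to the backbone nullity already supplied by Theorem~\ref{thm41}.
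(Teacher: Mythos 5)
Your plan follows the paper's strategy in all essentials: the upper bound is quoted from Theorem \ref{thm51}, the value $k=0$ comes from Theorem \ref{thm41}, and the intermediate values are realized by hanging a star and padding paths on a small unbalanced $\infty$-backbone, then peeling pendant pairs with Lemma \ref{lem11} and adding nullities componentwise via Lemma \ref{lem31} (the paper's graphs $G_1,G_2$ of Fig.2 are exactly of this shape). Your bookkeeping for the bulk range is sound: a pendant $P_2$ adds two vertices and nullity $0$; a star $K_{1,t}$ attached through its center adds $t+1$ vertices and nullity $t-1$; toggling between the backbones $\infty(3,3,2)$ and $\infty(3,3,3)$ fixes the parity of $n-k$ and covers every $k\leq n-8$; and a star of $t=n-6$ leaves placed directly at a vertex $w$ of $\infty(3,3,2)$ gives $\eta(\Gamma(G))=\eta(\Gamma(\mathrm{backbone}-w))+t-1=0+(n-7)=n-7$.

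The genuine gap is the extremal value $k=n-6$, and it is \emph{not} a minor variant of what you set up: for every vertex $w$ of $\infty(3,3,2)$ or $\infty(3,3,3)$ one checks (by Lemma \ref{lem11}, Proposition \ref{prop21} and Theorem \ref{thm22}) that $\eta(\Gamma(\mathrm{backbone}-w))=0$ --- the remnants are unions of $P_2$'s, triangles and short tadpoles, all of nullity $0$ --- so your star-at-$w$ move tops out at $n-7$, and the star-via-edge move at $n-8$; no combination of your listed blocks on an odd--odd backbone reaches $n-6$. What is needed is a surviving component of nullity $2$, i.e.\ a balanced quadrangle, which forces an even cycle into the backbone. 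This is precisely the paper's construction $G_1$: an unbalanced triangle and a balanced $C_4$ meeting the star $S_{n-6}$ at a cut vertex $v$, so that one application of Lemma \ref{lem11} leaves $\Gamma(P_2)\cup\Gamma(C_4)\cup(n-8)\Gamma(K_1)$ with nullity $0+2+(n-8)=n-6$; and for $n=7$, where no tree fits, the paper takes $G=\infty(3,4,2)$ with balanced $C_4$ and unbalanced triangle, whose nullity is $1=n-6$ directly by Theorem \ref{thm41}(2). Your own parenthetical hint --- attach the star at a vertex $w$ with $\eta(\Gamma(G-w))$ read off from Theorem \ref{thm22} --- does repair the gap, but only after you abandon $\infty(3,3,\cdot)$ for $\infty(3,4,2)$ and sign the quadrangle balanced; as written, the top of the interval $[0,n-6]$ is never actually attained, so the plan proves only that the nullity set contains $[0,n-7]$.
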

\begin{proof}
It suffices to show that for each $k\in [0,n-6]$, there exists an unbalanced bicyclic signed graph $\Gamma(G)\in \Gamma(B_{n}^{+})$ such that $\eta(\Gamma(G))=k$.

{\bf Case 1: } $k=0$.

It's clear that there exists an unbalanced bicyclic signed graph $\Gamma(G)=\Gamma(\infty(p,q,l))\in \Gamma(B_{n}^{+})$ satisfying $\eta(\Gamma(G))=0$ by Theorem \ref{thm41}, where $p,q\geq 3,l\geq 2$.

{\bf Case 2: } $k=n-6$.

Let $G=G_{1}$ shown in Fig.2, where $\Gamma(G_{1})$ contains a balanced quadrangle and an unbalanced triangle.
Thus $\eta(\Gamma(C_{4}))=2$ by Theorem \ref{thm22}.

If $n=7$, then $G=\infty(3,4,2)$ and $\eta(\Gamma(G))=1=n-6$ by (2) of Theorem \ref{thm41}.

If $n\geq 8$, then by Lemma \ref{lem11} and Lemma \ref{lem31}, we have
$\eta(\Gamma(G))=\eta(\Gamma(P_{2})\cup \Gamma(C_{4})\cup(n-8)\Gamma(K_{1}))=\eta(\Gamma(P_{2}))+\eta(\Gamma(C_{4}))+(n-8)\eta(\Gamma(K_{1}))=0+2+(n-8)=n-6$.

\vspace{0.6cm}

\begin{picture}(380, 100)

\put(20,60){\circle*{2}}
\put(50,40){\circle*{2}}
\put(40,100){\circle*{2}}
\put(60,60){\circle*{2}}
\put(70,40){\circle*{2}}
\put(100,60){\circle*{2}}
\put(100,100){\circle*{2}}
\put(140,60){\circle*{2}}
\put(140,100){\circle*{2}}

\put(20,60){\line(1,0){40}}
\put(20,60){\line(1,2){20}}
\put(40,100){\line(1,-2){20}}
\put(60,60){\line(-1,-2){10}}
\put(60,60){\line(1,-2){10}}
\put(60,60){\line(1,0){40}}
\put(100,60){\line(1,0){40}}
\put(100,60){\line(0,1){40}}
\put(100,100){\line(1,0){40}}
\put(140,100){\line(0,-1){40}}

\put(70,0){$G_{1}$}
\put(53,40){$\ldots$}
\put(60,50){\circle{150}}
\put(50,20){$S_{n-6}$}

\put(180,60){\circle*{2}}
\put(210,40){\circle*{2}}
\put(200,100){\circle*{2}}
\put(260,60){\circle*{2}}
\put(230,40){\circle*{2}}
\put(300,60){\circle*{2}}
\put(340,60){\circle*{2}}
\put(360,100){\circle*{2}}
\put(380,60){\circle*{2}}

\put(180,60){\line(1,0){40}}
\put(180,60){\line(1,2){20}}
\put(200,100){\line(1,-2){20}}
\put(210,40){\line(1,2){10}}
\put(220,60){\line(1,-2){10}}
\put(220,60){\line(1,0){40}}
\put(300,60){\line(1,0){40}}
\put(340,60){\line(1,2){20}}
\put(340,60){\line(1,0){40}}
\put(360,100){\line(1,-2){20}}

\put(290,0){$G_{2}$}
\put(225,20){$S_{k+2}$}
\put(213,40){$\ldots$}
\put(272,60){$\ldots$}
\put(255,55){$\underbrace{ \quad \quad\quad\quad }$}
\put(260,35){$P_{n-k-7}$}
\put(220,50){\circle{150}}
\put(160,-10){Fig.2}
\end{picture}
\vskip.3cm
{\bf Case 3: } $1\leq k\leq n-7$.

Let $G=G_{2}$ shown in Fig.2, where $\Gamma(G_{2})$ contains two unbalanced triangles. By using Lemma \ref{lem11} repeatedly, after $\lfloor\frac{n-k-4}{2}\rfloor$ steps, we have $$\eta(\Gamma(G))=\left\{
\begin{array}{lll}
\eta(\Gamma(P_{2})\cup \Gamma(C_{3}) \cup k\Gamma(K_{1})),        & {\mbox{if } n-k \mbox{ is odd;}}\\
\eta(2\Gamma(P_{2}) \cup k\Gamma(K_{1})),         & {\mbox{if } n-k \mbox{ is even.}}\\
\end{array} \right. $$

Hence by  Lemma \ref{lem31},

$\eta(\Gamma(G))=\left\{
\begin{array}{lll}
\eta(\Gamma(P_{2}))+\eta(\Gamma(C_{3}))+k\eta(\Gamma(K_{1}))=0+0+k=k,        & {\mbox{if } n-k \mbox{ is odd;}}\\
2\eta(\Gamma(P_{2}))+k\eta(\Gamma(K_{1}))=0+k=k,         & {\mbox{if } n-k \mbox{ is even.}}\\
\end{array} \right. $
\end{proof}

\subsection{The nullity set of unbalanced  bicyclic signed graphs in $\Gamma(B_{n}^{++})$}

\begin{theo}\label{thm53}
Let $n\geq 8$, $\Gamma(G)\in\Gamma(B_{n}^{++})$. Then $\eta(\Gamma(G))\leq n-6 $.
\end{theo}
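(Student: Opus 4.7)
The plan is to mirror the proof of Theorem \ref{thm51}: for each $\Gamma(G)\in\Gamma(B_{n}^{++})$ I will exhibit an induced subgraph $\Gamma(H)\subseteq\Gamma(G)$ with $r(\Gamma(H))\geq 6$. Since $A(\Gamma(H))$ is a principal submatrix of $A(\Gamma(G))$, rank monotonicity gives $r(\Gamma(G))\geq r(\Gamma(H))\geq 6$, whence $\eta(\Gamma(G))\leq n-6$. The only structural difference from Theorem \ref{thm51} is that the underlying $\infty$-graph is now $\infty(p,q,1)$, so the two cycles $C_{p}$ and $C_{q}$ share a single cut-vertex rather than being joined by an internal path, and the $\infty$-graph itself has only $p+q-1$ vertices. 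The tools I will use throughout are Lemma \ref{lem11} (pendant reduction), Proposition \ref{prop21}, Theorem \ref{thm22}, and Theorem \ref{thm41}.

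I split into three cases according to $(p,q)$. When $p\geq 5$ or $q\geq 5$, the subgraph $H_{3}$ of Fig.~1 (a long cycle with one pendant) is still an induced subgraph of $\Gamma(G)$, realized by taking the pendant to be any neighbor in $C_{q}$ of the shared cut-vertex; the rank computation from the proof of Theorem \ref{thm51} then applies verbatim and yields $r(\Gamma(H_{3}))\geq 6$. When $\{p,q\}\subseteq\{3,4\}$ and $(p,q)\neq(3,3)$, the $\infty$-graph itself already has $6$ or $7$ vertices, and Theorem \ref{thm41} shows that $\eta(\Gamma(\infty(p,q,1)))\leq 1$ in every signing except when $p=q=4$ with $\eta(\Gamma(C_{p}))=\eta(\Gamma(C_{q}))=2$; in the easy signings I take $\Gamma(H)=\Gamma(\infty(p,q,1))$, while in the remaining signings I adjoin one pendant vertex $u$ borrowed from the attached tree (nonempty because $n\geq 8$) and invoke Lemma \ref{lem11} to reduce $\eta(\Gamma(H))$ to the nullity of a small union of shorter paths and cycles, which is read off from Proposition \ref{prop21} and Theorem \ref{thm22}. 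Finally, when $p=q=3$ the bowtie alone has only $5$ vertices, so $n\geq 8$ again provides a vertex $u$ adjacent to a single vertex $w$ of the bowtie; the induced subgraph $\Gamma(H)=$ bowtie $+u$ has $u$ as a pendant, and Lemma \ref{lem11} reduces its nullity to that of either $2\Gamma(K_{2})$ (when $w$ is the cut-vertex) or the signed ``paw'' (otherwise), each of which has nullity $0$, giving $r(\Gamma(H))=6$.

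The main obstacle I anticipate is the tight subcase $p=q=4$ with both $4$-cycles contributing $\eta(\Gamma(C_{4}))=2$, where Theorem \ref{thm41} only yields $\eta(\Gamma(\infty(4,4,1)))=3$, i.e.\ rank $4$ for the $\infty$-graph alone. There I must enumerate the possible attachment positions of the augmenting pendant $u$ and verify, using Theorem \ref{thm22} on the surviving $C_{4}$, that each residual graph after pendant reduction (a $\Gamma(C_{4})$ with a short tail, a $\Gamma(C_{4})$ with two pendants at the same vertex, or $2\Gamma(P_{3})$) has nullity exactly $2$, so that $r(\Gamma(H))=8-2=6$ as required. Once this case is dispatched, all the other subcases reduce to routine applications of the pendant lemma combined with the nullity values of short signed cycles and paths.
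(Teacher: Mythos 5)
Your overall strategy is exactly the paper's: exhibit an induced subgraph of rank at least $6$ and use rank monotonicity of principal submatrices, with Lemma \ref{lem11} plus the small-graph nullities doing the computations. Your handling of $p\geq 5$ or $q\geq 5$ via $H_3$ is the paper's Case 2 verbatim, your bowtie case reproduces the paper's $H_4,H_5$ (both of nullity $0$), and your enumeration in the tight subcase $p=q=4$ with $\eta(\Gamma(C_p))=\eta(\Gamma(C_q))=2$ is correct and matches the paper's $H_{10},H_{11},H_{12}$: the three residual graphs you list ($\Gamma(C_4)$ with a length-two tail, $\Gamma(C_4)$ with two pendants at one vertex, and $2\Gamma(P_3)$) do each have nullity $2$, giving rank $6$ on $8$ vertices.

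However, there is a genuine gap in your dichotomy for $\{p,q\}\subseteq\{3,4\}$. You classify every signing with $\eta(\Gamma(\infty(p,q,1)))\leq 1$ as ``easy'' and take $\Gamma(H)=\Gamma(\infty(p,q,1))$ itself, but for $(p,q)=(3,4)$ the $\infty$-graph has only $6$ vertices, and by Theorem \ref{thm41}(2) the signings with $\eta(\Gamma(C_4))=2$ (balanced quadrangle) give $\eta(\Gamma(\infty(3,4,1)))=1$, hence $r(\Gamma(\infty(3,4,1)))=5$. That only yields $\eta(\Gamma(G))\leq n-5$, so the bound fails for this bucket as written; your stated exception (``only $p=q=4$ with both cycle nullities $2$'') is about where $\eta\leq 1$ holds, which is not the right criterion --- the right criterion is whether the number of vertices minus the nullity reaches $6$. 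The repair is your own augmentation device: since $n\geq 8>6$, adjoin a tree vertex $u$ pendant at some $w$ of $\infty(3,4,1)$ and apply Lemma \ref{lem11}; checking the four attachment positions ($w$ the cut-vertex, a triangle vertex, or a quadrangle vertex adjacent or antipodal to the cut-vertex) the reductions land on unions of triangles, $K_2$'s, $P_3$'s and $K_1$'s with total nullity at most $1$, so the $7$-vertex subgraph has rank at least $6$ in every signing. These augmented graphs are precisely the paper's $H_6,H_7,H_8,H_9$ (with $\eta(\Gamma(H_8))=0$ and the others of nullity $1$), which is exactly why the paper's Case 1 list includes $7$-vertex graphs for the $\{3,4\}$ configuration rather than stopping at the $\infty$-graph itself.
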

\begin{proof}
Let $G\in B_{n}^{++}$ be a bicyclic graph with trees attached on a $\infty$-graph, $\infty(p,q,1)$, where $p,q\geq 3$.

{\bf Case 1: } $p,q\in\{3,4\}$.

In this case, there must exist a graph $H$ on $h$ vertices as an induced subgraph of $G$, where  $H=H_{4}, H_5$ with $h=6$,
or $H=H_{6}, H_7, H_8, H_9$ with $h=7$, or  $H=H_{10}, H_{11}, H_{12}$ with $h=8$ shown in Fig.3.
By Lemma \ref{lem11} repeatedly and Theorem \ref{thm22}, we obtain  $\eta(\Gamma(H_{4}))=\eta(\Gamma(H_{5}))=\eta(\Gamma(H_{8}))=0$, $\eta(\Gamma(H_{6}))=\eta(\Gamma(H_{7}))=\eta(\Gamma(H_{9}))=1$, $\eta(\Gamma(H_{11}))=\eta(\Gamma(H_{12}))=2$, and let $\Gamma(C_{4})$ is the quadrangle containing no pendent edge of $\Gamma(H_{10})$, we have
$$ \eta(\Gamma(H_{10}))=\left\{
\begin{array}{lll}
2,        & {\mbox{if } \Gamma(C_{4}) \mbox{ is balanced;}}\\
0,        & {\mbox{if } \Gamma(C_{4}) \mbox{ is unbalanced.}}\\
\end{array} \right. $$

Hence for each $\Gamma(H_{i})(i=4, 5, \ldots, 12)$, we have $r(\Gamma(H_{i})
)\geq6$, so $r(\Gamma(G))\geq r(\Gamma(H_{i}))\geq 6$. Thus $\eta(\Gamma(G))\leq n-6$.

\vskip.5cm

\begin{picture}(360, 80)
\put(20,40){\circle*{2}}
\put(40,80){\circle*{2}}
\put(60,40){\circle*{2}}
\put(80,80){\circle*{2}}
\put(100,40){\circle*{2}}
\put(100,80){\circle*{2}}

\put(20,40){\line(1,2){20}}
\put(20,40){\line(1,0){40}}
\put(40,80){\line(1,-2){20}}
\put(60,40){\line(1,2){20}}
\put(60,40){\line(1,0){40}}
\put(80,80){\line(1,-2){20}}
\put(100,40){\line(0,1){40}}

\put(160,40){\circle*{2}}
\put(180,80){\circle*{2}}
\put(200,40){\circle*{2}}
\put(200,80){\circle*{2}}
\put(220,80){\circle*{2}}
\put(240,40){\circle*{2}}

\put(160,40){\line(1,2){20}}
\put(160,40){\line(1,0){40}}
\put(180,80){\line(1,-2){20}}
\put(200,40){\line(1,2){20}}
\put(200,40){\line(0,1){40}}
\put(200,40){\line(1,0){40}}
\put(220,80){\line(1,-2){20}}

\put(300,80){\circle*{2}}
\put(300,40){\circle*{2}}
\put(320,80){\circle*{2}}
\put(340,40){\circle*{2}}
\put(340,80){\circle*{2}}
\put(380,40){\circle*{2}}
\put(380,80){\circle*{2}}

\put(300,80){\line(0,-1){40}}
\put(300,40){\line(1,0){40}}
\put(300,40){\line(1,2){20}}
\put(320,80){\line(1,-2){20}}
\put(340,40){\line(0,1){40}}
\put(340,40){\line(1,0){40}}
\put(340,80){\line(1,0){40}}
\put(380,80){\line(0,-1){40}}

\put(50,20){$H_4$}
\put(190,20){$H_5$}
\put(330,20){$H_6$}
\end{picture}\\

\begin{picture}(400, 80)
\put(20,40){\circle*{2}}
\put(40,80){\circle*{2}}
\put(60,40){\circle*{2}}
\put(60,80){\circle*{2}}
\put(50,80){\circle*{2}}
\put(100,40){\circle*{2}}
\put(100,80){\circle*{2}}

\put(20,40){\line(1,2){20}}
\put(20,40){\line(1,0){40}}
\put(40,80){\line(1,-2){20}}
\put(60,40){\line(0,1){40}}
\put(60,40){\line(-1,4){10}}
\put(60,40){\line(1,0){40}}
\put(100,80){\line(0,-1){40}}
\put(60,80){\line(1,0){40}}

\put(160,40){\circle*{2}}
\put(180,80){\circle*{2}}
\put(200,40){\circle*{2}}
\put(200,80){\circle*{2}}
\put(240,80){\circle*{2}}
\put(240,40){\circle*{2}}
\put(260,80){\circle*{2}}

\put(160,40){\line(1,2){20}}
\put(160,40){\line(1,0){40}}
\put(180,80){\line(1,-2){20}}
\put(200,40){\line(0,1){40}}
\put(200,40){\line(1,0){40}}
\put(240,80){\line(0,-1){40}}
\put(240,80){\line(-1,0){40}}
\put(240,40){\line(1,2){20}}

\put(300,40){\circle*{2}}
\put(320,80){\circle*{2}}
\put(340,40){\circle*{2}}
\put(340,80){\circle*{2}}
\put(380,40){\circle*{2}}
\put(400,40){\circle*{2}}

\put(300,40){\line(1,0){40}}
\put(300,40){\line(1,2){20}}
\put(320,80){\line(1,-2){20}}
\put(340,40){\line(0,1){40}}
\put(340,40){\line(1,0){40}}
\put(340,80){\line(1,0){40}}
\put(380,80){\line(0,-1){40}}
\put(380,80){\line(1,-2){20}}

\put(50,20){$H_7$}
\put(190,20){$H_8$}
\put(330,20){$H_9$}
\end{picture}\\

\begin{picture}(360, 120)
\put(20,80){\circle*{2}}
\put(20,120){\circle*{2}}
\put(100,120){\circle*{2}}
\put(60,40){\circle*{2}}
\put(60,80){\circle*{2}}
\put(60,120){\circle*{2}}
\put(100,40){\circle*{2}}
\put(100,80){\circle*{2}}

\put(20,80){\line(0,1){40}}
\put(20,80){\line(1,0){40}}
\put(100,120){\line(0,-1){40}}
\put(20,120){\line(1,0){40}}
\put(60,120){\line(0,-1){40}}
\put(60,80){\line(1,0){40}}
\put(60,80){\line(0,-1){40}}
\put(60,40){\line(1,0){40}}
\put(100,80){\line(0,-1){40}}

\put(160,80){\circle*{2}}
\put(180,80){\circle*{2}}
\put(180,120){\circle*{2}}
\put(220,40){\circle*{2}}
\put(220,80){\circle*{2}}
\put(220,120){\circle*{2}}
\put(260,40){\circle*{2}}
\put(260,80){\circle*{2}}

\put(160,80){\line(1,2){20}}
\put(180,80){\line(1,0){40}}
\put(180,80){\line(0,1){40}}
\put(180,120){\line(1,0){40}}
\put(220,120){\line(0,-1){40}}
\put(220,80){\line(1,0){40}}
\put(220,80){\line(0,-1){40}}
\put(220,40){\line(1,0){40}}
\put(260,40){\line(0,1){40}}

\put(300,80){\circle*{2}}
\put(300,120){\circle*{2}}
\put(340,40){\circle*{2}}
\put(340,80){\circle*{2}}
\put(340,120){\circle*{2}}
\put(380,40){\circle*{2}}
\put(380,80){\circle*{2}}
\put(380,120){\circle*{2}}

\put(300,80){\line(1,0){40}}
\put(300,80){\line(0,1){40}}
\put(300,120){\line(1,0){40}}
\put(340,80){\line(1,0){40}}
\put(340,80){\line(0,1){40}}
\put(340,80){\line(1,1){40}}
\put(340,80){\line(0,-1){40}}
\put(340,40){\line(1,0){40}}
\put(380,40){\line(0,1){40}}

\put(50,20){$H_{10}$}
\put(210,20){$H_{11}$}
\put(330,20){$H_{12}$}
\put(200,-7) {Fig.3}
\end{picture}

{\bf Case 2: } $p\geq5$ or $q\geq5$.

Without loss of generality, we assume that $p\geq5$. There must exist a graph $H_{3}$ on $p+1$ vertices shown in Fig.1 as an induced subgraph of $G$.
 Similar to the proof of Case 2 in Theorem \ref{thm51}, we have $r(\Gamma(H_{3}))\geq6$.
Then $r(\Gamma(G))\geq r(\Gamma(H_{3}))\geq6$ and  thus $\eta(\Gamma(G))\leq n-6$.
\end{proof}

\begin{lem}\label{lem51}{ \rm (\cite{20132})}
Let $H_{13}$ be a  graph on $5$ vertices as shown in Fig.4, and the two triangles of $\Gamma(H_{13})$ have the same balanceness. Then $\eta(\Gamma(H_{13}))=n-5=0$.
\end{lem}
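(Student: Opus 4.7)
The plan is to compute the constant term $a_5$ of the characteristic polynomial $P_{\Gamma(H_{13})}(\lambda)$ directly via Corollary \ref{cor21}, and show $a_5\neq 0$; since $a_5=(-1)^5\det A(\Gamma(H_{13}))$, this will give $r(\Gamma(H_{13}))=5$, i.e.\ $\eta(\Gamma(H_{13}))=0$.

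First I would identify $H_{13}$: a graph on $5$ vertices with two triangles must be the ``bowtie'' — two triangles $C_3^L,C_3^R$ sharing a single common vertex $v_0$. Label the two non-central vertices of the left (resp.\ right) triangle by $u_1,u_2$ (resp.\ $w_1,w_2$), and write $e_L=u_1u_2$, $e_R=w_1w_2$.

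Next I would enumerate all basic figures $\Gamma(U)$ of $\Gamma(H_{13})$ covering all $5$ vertices. Each $\Gamma(U)$ is a vertex-disjoint union of elementary figures (cycles and copies of $K_2$). Since $5$ is odd, $\Gamma(U)$ must contain at least one cycle, and the only cycles available in the bowtie are the two triangles $\Gamma(C_3^L)$ and $\Gamma(C_3^R)$. Choosing one triangle uses $v_0$ and two of $\{u_1,u_2,w_1,w_2\}$, and the remaining two vertices must be spanned by a single $K_2$ — which is available only as $e_R$ (resp.\ $e_L$). Hence there are exactly two spanning basic figures,
$$\Gamma(U_1)=\Gamma(C_3^L)\cup\{e_R\},\qquad \Gamma(U_2)=\Gamma(C_3^R)\cup\{e_L\},$$
each with $p(\Gamma(U_i))=2$ and $c(\Gamma(U_i))=1$. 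Let $s_L,s_R$ denote the number of negative edges of the left and right triangle, respectively. By Corollary \ref{cor21},
$$a_5=(-1)^{2+s_L}\cdot 2+(-1)^{2+s_R}\cdot 2=2\bigl[(-1)^{s_L}+(-1)^{s_R}\bigr].$$

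Finally, a triangle is balanced iff the number of its negative edges is even, so $(-1)^{s_L}=(-1)^{s_R}=+1$ when both triangles are balanced and $(-1)^{s_L}=(-1)^{s_R}=-1$ when both are unbalanced. In either case $a_5=\pm 4\neq 0$, which forces $\eta(\Gamma(H_{13}))=0$, as required.

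There is no serious obstacle in this argument; the only point requiring care is the exhaustive (but short) listing of spanning basic figures and the correct use of the sign $(-1)^{p(\Gamma(U))+s(\Gamma(U))}$ from Corollary \ref{cor21}. Note also that the ``different balanceness'' case gives $a_5=0$, which is why the hypothesis that the two triangles share the same balanceness is essential.
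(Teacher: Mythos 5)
Your proof is correct, and there is nothing in the paper to compare it against line by line: Lemma \ref{lem51} is stated here as an imported result from \cite{20132}, with no proof given in this paper. Your argument is a valid self-contained substitute, and it is exactly in the spirit of the paper's own technique elsewhere: the authors prove Theorem \ref{thm22} and Case 1 of Theorem \ref{thm41} by the same device of enumerating spanning basic figures and evaluating $a_n$ via Corollary \ref{cor21}. Your enumeration is sound --- the only cycles of the bowtie are the two triangles (no $4$- or $5$-cycle exists since the triangles share a single vertex), $5$ is odd so a perfect matching is impossible, and hence $\Gamma(U_1)$, $\Gamma(U_2)$ are the only spanning basic figures; the signs of the $K_2$-edges $e_L,e_R$ correctly drop out because $\xi(e;U)=2$ for edges outside cycles, which is precisely why $s(\Gamma(U))$ in Corollary \ref{cor21} counts only negative edges lying in cycles. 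One remark worth making explicit: the cut-point machinery of Section 3 is genuinely unavailable here, since for the cut-point $v_0$ each component $\Gamma(G_1)=\Gamma(P_2)$ of $\Gamma(H_{13}-v_0)$ satisfies $\eta(\Gamma(P_2))=0=\eta(\Gamma(C_3))=\eta(\Gamma(G_1+v_0))$, so neither the hypothesis of Theorem \ref{thm31} nor that of Theorem \ref{thm32} holds, and Lemma \ref{lem11} is inapplicable as $H_{13}$ has no pendant vertex; your direct computation of $a_5=2\bigl[(-1)^{s_L}+(-1)^{s_R}\bigr]=\pm 4$ is thus the natural route. Your closing observation that opposite balanceness forces $a_5=0$ (hence $\eta\geq 1$) correctly explains why the same-balanceness hypothesis cannot be dropped.
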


\vskip.3cm

\begin{picture}(300, 80)
\put(200,40){\circle*{2}}
\put(220,80){\circle*{2}}
\put(240,40){\circle*{2}}
\put(260,80){\circle*{2}}
\put(280,40){\circle*{2}}
\put(280,40){\circle*{2}}
\put(300,50){\circle*{2}}
\put(300,30){\circle*{2}}

\put(200,40){\line(1,2){20}}
\put(200,40){\line(1,0){40}}
\put(220,80){\line(1,-2){20}}
\put(240,40){\line(1,2){20}}
\put(240,40){\line(1,0){40}}
\put(260,80){\line(1,-2){20}}
\put(280,40){\line(2,1){20}}
\put(280,40){\line(2,-1){20}}

\put(240,15){$G_{3}$}
\put(298,35){$\vdots$}
\put(290,40){\circle{80}}
\put(280,65){$S_{n-4}$}

\put(60,40){\circle*{2}}
\put(80,80){\circle*{2}}
\put(100,40){\circle*{2}}
\put(120,80){\circle*{2}}
\put(140,40){\circle*{2}}

\put(60,40){\line(1,2){20}}
\put(60,40){\line(1,0){40}}
\put(80,80){\line(1,-2){20}}
\put(100,40){\line(1,2){20}}
\put(100,40){\line(1,0){40}}
\put(120,80){\line(1,-2){20}}
\put(90,15){$H_{13}$}
\put(150, 5){Fig.4}
\end{picture}

\begin{theo}\label{thm54}
Let $n\geq 8$, Then the nullity set of unbalanced bicyclic signed graphs in $\Gamma(B_{n}^{++})$ is $[0,n-6]$.
\end{theo}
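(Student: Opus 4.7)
The plan is to mirror the constructive strategy of Theorem \ref{thm52}. Theorem \ref{thm53} already gives $\eta(\Gamma(G))\le n-6$ for every $\Gamma(G)\in\Gamma(B_n^{++})$, so the task is to exhibit, for each $k\in[0,n-6]$, an unbalanced signed graph $\Gamma(G)\in\Gamma(B_n^{++})$ with $\eta(\Gamma(G))=k$. I would treat the extreme value $k=n-6$ separately and cover the range $0\le k\le n-7$ by a single parametrised family, both based on $\Gamma(\infty(3,3,1))$ with both triangles signed unbalanced (so the resulting signed graph is automatically unbalanced and of $\infty$-type with $l=1$).

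For $k=n-6$ take $\Gamma(G_3)$ of Fig.4: $\Gamma(\infty(3,3,1))$ with $n-5$ pendant edges attached at an outer vertex $u$, both triangles unbalanced. One application of Lemma \ref{lem11} to a pendant at $u$ deletes that pendant together with $u$; in the resulting graph the remaining vertex of the broken triangle is now a pendant at the central vertex $c$, so a second application of Lemma \ref{lem11} removes $c$ together with this pendant. What remains is $\Gamma(P_2)\cup(n-6)\Gamma(K_1)$, so Lemma \ref{lem31} and Proposition \ref{prop21} yield $\eta(\Gamma(G_3))=n-6$.

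For $0\le k\le n-7$ I propose the family $\Gamma(G_k^{++})$ obtained from $\Gamma(\infty(3,3,1))$ (both triangles unbalanced) by (i) introducing a new vertex $c'$ adjacent to an outer vertex $v_3$ of one triangle and attaching $k+1$ pendant leaves at $c'$, and (ii) attaching a path on $n-k-7$ new vertices at an outer vertex $v_1$ of the other triangle. This graph has $n$ vertices, lies in $B_n^{++}$, and is unbalanced. One application of Lemma \ref{lem11} to a leaf of $c'$ collapses $c'$ and produces $k$ isolated vertices; iterated application at the free end of the path then strips the path pair by pair. If $n-k-7$ is even the residual is $\Gamma(\infty(3,3,1))\cup k\,\Gamma(K_1)$, whose nullity is $0+k=k$ by Theorem \ref{thm41}(1) and Lemma \ref{lem31}. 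If $n-k-7$ is odd, two further applications of Lemma \ref{lem11} remove $v_1$ and then $c$ together with the pendant $v_2$ that appears at $c$, leaving $\Gamma(P_2)\cup k\,\Gamma(K_1)$, whose nullity is again $k$.

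The main obstacle is the bookkeeping for the intermediate family: one must verify that each Lemma \ref{lem11} step really is applicable in the stated order (in particular, that once $c'$ is deleted the remaining leaves at $c'$ become isolated, and that once the path is stripped the vertex $v_1$ is a genuine pendant of what remains), and that the signing on the two triangles of $\infty(3,3,1)$ is undisturbed by the reductions on the attached trees, so that both triangles are still unbalanced at the point where Theorem \ref{thm41}(1) is invoked. Once these points are checked, the theorem follows.
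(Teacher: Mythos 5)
Your proposal is correct and follows essentially the same route as the paper: upper bound from Theorem \ref{thm53}, the graph $G_3$ of Fig.4 with two unbalanced triangles for $k=n-6$, and for the remaining values of $k$ a copy of $\infty(3,3,1)$ (both triangles unbalanced) carrying a star and a path, reduced by repeated use of Lemma \ref{lem11} to $\Gamma(\infty(3,3,1))\cup k\,\Gamma(K_1)$ or $\Gamma(P_2)\cup k\,\Gamma(K_1)$ according to parity. The only cosmetic differences are that the paper's graph $G_4$ (Fig.5) concatenates the path and the star in series at one outer vertex rather than hanging them on opposite triangles, and that it cites Lemma \ref{lem51} where you invoke Theorem \ref{thm41}(1) to get $\eta(\Gamma(\infty(3,3,1)))=0$ --- these are equivalent, since your residual graph is exactly $\Gamma(H_{13})$ with both triangles of the same balanceness.
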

\begin{proof}
It suffices to show that for each $k\in [0,n-6]$, there exists an unbalanced bicyclic signed graph $\Gamma(G)\in \Gamma(B_{n}^{++})$ such that $\eta(\Gamma(G))=k$.

{\bf Case 1: } $k=0$.

It's clear that there exists an unbalanced bicyclic signed graph $\Gamma(G)=\Gamma(\infty(p,q,1))\in \Gamma(B_{n}^{++})$  satisfying $\eta(\Gamma(G))=0$ by Theorem \ref{thm41}, where $p,q\geq 3$.

{\bf Case 2: } $k=n-6$.

Let $G=G_{3}$ shown in Fig.4, where the two triangles of $\Gamma(G_{3})$ are unbalanced.
Then by Lemma \ref{lem11} and Lemma \ref{lem31}, we have
$\eta(\Gamma(G))=\eta(\Gamma(P_{2})\cup (n-6)\Gamma(K_1))=\eta(\Gamma(P_{2}))+(n-6)\eta(\Gamma(K_{1}))=0+(n-6)=n-6$.

\vskip.5cm
\begin{picture}(360, 80)
\put(60,40){\circle*{2}}
\put(80,80){\circle*{2}}
\put(100,40){\circle*{2}}
\put(120,80){\circle*{2}}
\put(140,40){\circle*{2}}
\put(180,40){\circle*{2}}
\put(220,40){\circle*{2}}
\put(260,40){\circle*{2}}
\put(280,50){\circle*{2}}
\put(280,30){\circle*{2}}

\put(60,40){\line(1,2){20}}
\put(60,40){\line(1,0){40}}
\put(80,80){\line(1,-2){20}}
\put(100,40){\line(1,2){20}}
\put(100,40){\line(1,0){40}}
\put(120,80){\line(1,-2){20}}
\put(140,40){\line(1,0){40}}
\put(220,40){\line(1,0){40}}
\put(260,40){\line(2,1){20}}
\put(260,40){\line(2,-1){20}}

\put(130,10){$G_{4}$}
\put(278,35){$\vdots$}
\put(270,40){\circle{80}}
\put(260,65){$S_{k+2}$}
\put(195,40){$\ldots$}
\put(177,35){$\underbrace{\quad \quad \quad\quad}$}
\put(180,10){$P_{n-k-7}$}

\put(180,-10){Fig.5}
\end{picture}

{\bf Case 3: } $1\leq k\leq n-7$.

Let $G=G_{4}$ shown in Fig.5, where the two triangles of $\Gamma(G_{4})$ are unbalanced.

{\bf Subcase 3.1: } $n-k$ is odd.

By using Lemma \ref{lem11} repeatedly, after $\frac{n-k-5}{2}$ steps, we obtain the graph $k\Gamma(K_{1})\cup\Gamma(H_{13})$, where  the two triangles of $\Gamma(H_{13})$ are unbalanced.   Hence $\eta(\Gamma(G))=\eta(k\Gamma(K_{1})\cup \Gamma(H_{13}))=\eta(k\Gamma(K_{1}))+\eta(\Gamma(H_{13}))=k+0=k$ by Lemma \ref{lem51}.

{\bf Subcase 3.2: } $n-k$ is even.

By using Lemma \ref{lem11} repeatedly, after $\frac{n-k-2}{2}$ steps, we obtain the graph $k\Gamma(K_{1})\cup \Gamma(P_{2})$. Hence $\eta(\Gamma(G))=\eta(k\Gamma(K_{1})\cup \Gamma(P_{2}))=\eta(k\Gamma(K_{1}))+\eta(\Gamma(P_{2}))=k+0=k$.
\end{proof}

\subsection{The nullity set of unbalanced  bicyclic signed graphs in $\Gamma(\Theta_{n})$}

\begin{lem}\label{lem52} { \rm (\cite{20132})}
Let $\Gamma(G)$ be an unbalanced bicyclic signed graph on $n$ vertices. Then $\eta(\Gamma(G))\leq n-3$, with equality if and only if $\Gamma(G)=\Gamma(\Theta(2,2,1))$ and the two triangles of~  $\Gamma(\Theta(2,2,1))$ are both unbalanced.
\end{lem}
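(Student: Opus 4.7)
The plan is to split the proof by the underlying bicyclic structure into the three classes $\Gamma(B_n^+)$, $\Gamma(B_n^{++})$, $\Gamma(\Theta_n)$ introduced in Section~5.

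For $\Gamma(G)\in\Gamma(B_n^+)\cup\Gamma(B_n^{++})$, Theorems~\ref{thm51} and~\ref{thm53} already give the stronger bound $\eta(\Gamma(G))\le n-6<n-3$, so these classes contribute no extremal graphs and the inequality is strict on them.

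The substantive case is $\Gamma(G)\in\Gamma(\Theta_n)$, with $G=\Theta(p,q,l)$ possibly with trees attached. I would first peel off pendants by repeatedly applying Lemma~\ref{lem11}, which preserves $\eta$ while decreasing the vertex count by $2$ at each step. A short rank argument then shows that no pendant-removal step is compatible with attaining $\eta=n-3$: the stripped graph on $n-2$ vertices would need rank exactly $1$ to retain $\eta=n-3$, but a symmetric matrix with zero diagonal can never have rank $1$. Hence any extremal graph is already pendant-free, i.e.\ equal to some base signed theta graph $\Gamma(\Theta(p,q,l))$ on $n_0=p+q+l-1$ vertices, and the problem reduces to analysing these base theta graphs.

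For $\Gamma(\Theta(p,q,l))$ I would read off the trailing coefficients of $P_{\Gamma(\Theta(p,q,l))}(\lambda)$ via Corollary~\ref{cor21}. The basic figures are perfect matchings of $\Theta(p,q,l)$ itself, together with those obtained by taking one of the three theta-cycles $C_{p+l}$, $C_{q+l}$, $C_{p+q}$ and a perfect matching on the leftover path. A parity case split on $(p,q,l)$, combined with the identity $\sgn(C_{p+l})\sgn(C_{q+l})\sgn(C_{p+q})=+$ among the three theta-cycle signs, would establish that at least one of $a_{n_0}$, $a_{n_0-2}$ is nonzero whenever $(p,q,l)\neq(2,2,1)$, giving $r(\Gamma(\Theta(p,q,l)))\ge 4$ and hence $\eta\le n_0-4<n_0-3$. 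The remaining case $(p,q,l)=(2,2,1)$ has $n_0=4$ and admits only three basic figures on four vertices: the quadrangle and the two perfect matchings. Direct evaluation using Corollary~\ref{cor21} yields $a_4=2+2(-1)^{s_Q+1}$ and $a_3=2\bigl((-1)^{s_1+1}+(-1)^{s_2+1}\bigr)$, where $s_1,s_2,s_Q$ count the negative edges on the two triangles and the quadrangle. Using $s_Q\equiv s_1+s_2\pmod 2$ and the unbalancedness of $\Gamma(G)$, one obtains $a_4=0$ exactly when both triangles are unbalanced (so $s_Q$ is even), in which case $a_3=4\neq 0$ and $\eta=1=n-3$; in every other unbalanced configuration $a_4\neq 0$, so $\eta=0<n-3$.

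The main technical obstacle is the coefficient analysis for the general $\Theta(p,q,l)$: the dependencies among the three cycle-signs, combined with the three parities of $p,q,l$, create several subcases, and in each of them one must verify that the signed contributions of the basic figures to $a_{n_0}$ and to $a_{n_0-2}$ do not simultaneously cancel.
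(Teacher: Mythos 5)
First, a point of comparison: the paper does not prove Lemma \ref{lem52} at all --- it is quoted from Fan, Du and Dong \cite{20132} --- so there is no in-paper proof to match your attempt against, and the attempt must stand on its own. It does not, because of a genuine gap in the theta-graph step. Your claim that for every unbalanced $\Gamma(\Theta(p,q,l))$ with $(p,q,l)\neq(2,2,1)$ at least one of $a_{n_0}$, $a_{n_0-2}$ is nonzero is false. Two counterexamples: (i) $\Theta(2,2,2)$ is the bipartite graph $K_{2,3}$, so every odd-indexed coefficient vanishes for any signing (no basic figure has an odd number of vertices), giving $a_5=a_3=0$; an unbalanced signing has rank $4$, detected only by $a_4$. (ii) Sign $\Theta(2,4,2)$ (branch vertices $u,v$; paths $u\!-\!a\!-\!v$, $u\!-\!x\!-\!y\!-\!z\!-\!v$, $u\!-\!b\!-\!v$) with all edges positive except $xy$: this is unbalanced (both hexagons are negative, the quadrangle positive), and a direct kernel computation gives $\eta=3$, i.e.\ rank $4$ on $n_0=7$ vertices, so $a_7=a_6=a_5=0$ and both of your designated coefficients vanish although $(p,q,l)\neq(2,2,1)$. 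Moreover, even where the claim holds, the inference that $a_{n_0-2}\neq0$ gives $r\geq4$ is invalid for $n_0=5$: there it yields only $r\geq3$, i.e.\ $\eta\leq2=n_0-3$, which fails to rule out equality; for instance $\Theta(3,2,1)$ with the triangle and the pentagon both negative has $a_5=0$ and $a_3\neq0$, and one must go to $a_4=4\neq0$ to conclude $\eta\leq1$. In short, the index of the largest nonvanishing coefficient depends delicately on the parities of $p,q,l$ and can sit as deep as $n_0-3$; the parity case analysis you defer to the last sentence is precisely where the proof lives, and the shortcut through the pair $\{a_{n_0},a_{n_0-2}\}$ fails.

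Two smaller repairs are also needed. Theorems \ref{thm51} and \ref{thm53} carry the hypotheses $n\geq7$ and $n\geq8$, so unbalanced graphs in $\Gamma(B_n^{++})$ with $5\leq n\leq 7$ (e.g.\ $\Gamma(\infty(3,3,1))$ on $5$ vertices) and in $\Gamma(B_6^{+})$ are not covered by your first step; they can be handled by Theorem \ref{thm41} together with your pendant argument, but as written the assertion that these classes contribute no extremal graphs is unproved for small $n$. Also, your peeling step rules out $\eta(G)=n-3$ via the rank-$1$ obstruction, but for the inequality itself you must additionally exclude $\eta(G)=n-2$, i.e.\ $r(H)=0$ for the stripped graph $H$: this follows from an edge count (a bicyclic $G$ has $n+1$ edges, while at most $n-1$ edges meet a pendant vertex and its neighbour, so $H$ retains an edge and $r(H)\geq2$). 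That count in fact gives the cleaner statement that any bicyclic signed graph with a pendant vertex satisfies $\eta\leq n-4$, so extremal graphs must be base theta graphs --- this part of your plan is sound once patched, and the $\Theta(2,2,1)$ computation itself ($a_4=2+2(-1)^{s_Q+1}$, $a_3=2((-1)^{s_1+1}+(-1)^{s_2+1})$, $s_Q\equiv s_1+s_2 \pmod 2$) is correct.
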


By Lemma \ref{lem52}, we obtain the following result immidiately.

\begin{prop}\label{prop51}
Let $n\geq 5$, $\Gamma(G)$ be an unbalanced bicyclic signed graph  in $\Gamma(\Theta_{n})$. Then $\eta(\Gamma(G))\leq n-4$.
\end{prop}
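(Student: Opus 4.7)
The plan is to derive this proposition as an immediate consequence of Lemma \ref{lem52}, which gives the sharp bound $\eta(\Gamma(H)) \le n-3$ for every unbalanced bicyclic signed graph $\Gamma(H)$ on $n$ vertices together with the full characterization of the equality case. Applied to our $\Gamma(G) \in \Gamma(\Theta_n)$, this yields $\eta(\Gamma(G)) \le n-3$, with equality if and only if $\Gamma(G) = \Gamma(\Theta(2,2,1))$ and both of its triangles are unbalanced.

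The key step is then to rule out the extremal configuration whenever $n \ge 5$. I would count vertices in $\Theta(2,2,1)$: it consists of three internally disjoint paths $P_{3}, P_{3}, P_{2}$ sharing two common end-vertices, so it has $2 + (2-1) + (2-1) + (1-1) = 4$ vertices in total. Consequently $\Gamma(\Theta(2,2,1))$ belongs to $\Gamma(\Theta_4)$ only, and cannot lie in $\Gamma(\Theta_n)$ for any $n \ge 5$.

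Putting these two observations together: under the hypothesis $n \ge 5$, the equality case of Lemma \ref{lem52} is excluded, so the inequality becomes strict, $\eta(\Gamma(G)) < n-3$; since $\eta(\Gamma(G))$ is a non-negative integer, this strengthens to $\eta(\Gamma(G)) \le n-4$, as required. There is no substantive obstacle here—the proposition is essentially a corollary of Lemma \ref{lem52}—and the only subtle point worth stating explicitly is the vertex count for the extremal graph $\Theta(2,2,1)$, which confines the extremizer to $n=4$ and thereby forces the improved bound for all $n \ge 5$.
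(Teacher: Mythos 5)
Your proposal is correct and follows exactly the paper's route: the paper likewise derives Proposition~\ref{prop51} immediately from Lemma~\ref{lem52}, since the extremal graph $\Gamma(\Theta(2,2,1))$ has only $4$ vertices and therefore cannot occur in $\Gamma(\Theta_n)$ for $n\geq 5$, forcing the strict inequality $\eta(\Gamma(G))<n-3$ and hence $\eta(\Gamma(G))\leq n-4$ by integrality. Your only addition is making the vertex count of $\Theta(2,2,1)$ explicit, which the paper leaves implicit.
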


\begin{theo}\label{thm56}
Let $n\geq 6$. Then the nullity set of unbalanced bicyclic signed graphs  in $\Gamma(\Theta_{n})$ is $[0,n-4]$.
\end{theo}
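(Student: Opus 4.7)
The plan is to combine the upper bound $\eta(\Gamma(G))\le n-4$ from Proposition~\ref{prop51} with explicit constructions that realize every value $k\in[0,n-4]$. Following the three-case template of Theorems~\ref{thm52} and~\ref{thm54}, I would treat $k=0$ separately and handle $1\le k\le n-4$ by a unified family of graphs. In every case the core will be $\Gamma(\Theta(2,2,1))$; label its vertices $u,v,a,b$ so that $u,v$ have degree $3$, $a,b$ have degree $2$, and the two triangles are $uva$ and $uvb$.

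For $1\le k\le n-4$, take both triangles of $\Theta(2,2,1)$ unbalanced (so $\eta(\Gamma(\Theta(2,2,1)))=1$ by Lemma~\ref{lem52}), attach a (possibly empty) path of $n-4-k$ new vertices at $u$, and hang $k$ pendants from the far end of that path (the far end being $u$ itself when $n-4-k=0$). The first application of Lemma~\ref{lem11} to one of the pendants and its neighbor leaves the remaining $k-1$ pendants isolated and shortens the attached path by one vertex; subsequent applications strip the path two vertices at a time. A split on the parity of $n-k$ shows that the final residue is either $\Gamma(\Theta(2,2,1))\cup(k-1)\Gamma(K_1)$ (when $n-k$ is odd and the path peels cleanly) or $\Gamma(P_3)\cup(k-1)\Gamma(K_1)$ (when $n-k$ is even, with one last use of Lemma~\ref{lem11} removing the surviving path vertex together with $u$ and leaving $\Theta(2,2,1)-u=P_3$). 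By Lemma~\ref{lem52}, Proposition~\ref{prop21} and Lemma~\ref{lem31} both residues have nullity $k$, so iterated Lemma~\ref{lem11} gives $\eta(\Gamma(G))=k$.

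For $k=0$, take $\Gamma(\Theta(2,2,1))$ with only the triangle $uvb$ unbalanced; a direct $4\times 4$ determinant computation (or an application of Corollary~\ref{cor21}) gives $\eta(\Gamma(\Theta(2,2,1)))=0$ in this signing. Attach a path of $n-4$ vertices at the degree-$2$ vertex $a$ of the balanced triangle. Iterating Lemma~\ref{lem11} reduces $\Gamma(G)$ to $\Gamma(\Theta(2,2,1))$ when $n-4$ is even, and to the unbalanced triangle $\Gamma(C_3)$ on $\{u,v,b\}$ when $n-4$ is odd (the last step deleting the surviving pendant and $a$). Both residues have nullity $0$, whence $\eta(\Gamma(G))=0$.

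The main technical step is the parity split in the intermediate case, where the residue after iterated pendant-removal depends on whether $n-k$ is odd or even; the bookkeeping is entirely analogous to Subcases 3.1--3.2 in the proof of Theorem~\ref{thm54}. A secondary concern, easily handled, is to confirm that each constructed graph is genuinely unbalanced: pendant and path edges do not participate in any cycle, so the signs of the core triangles $uva$ and $uvb$ are preserved and every construction contains at least one unbalanced triangle by design.
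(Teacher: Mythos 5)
Your proposal is correct, and it follows the paper's overall template --- Proposition~\ref{prop51} for the upper bound $n-4$, then explicit unbalanced witnesses in $\Gamma(\Theta_{n})$ collapsed by iterated use of Lemma~\ref{lem11} --- but your witness family is a genuine variant, and a somewhat cleaner one. The paper uses four separate constructions ($G_5$--$G_8$ of Fig.~6): notably, for $1\le k\le n-5$ with $n-k$ even it switches to $G_8$, a theta graph made of an unbalanced triangle and a balanced quadrangle, whose residue after pendant removals is $\Gamma(C_4)\cup(k-2)\Gamma(K_1)$ with nullity $2+(k-2)=k$ via Theorem~\ref{thm22}. You instead keep the core $\Gamma(\Theta(2,2,1))$ with both triangles unbalanced for all $1\le k\le n-4$, and your even-parity residue is $\Gamma(P_3)\cup(k-1)\Gamma(K_1)$ (since $\Theta(2,2,1)-u=P_3$), giving $1+(k-1)=k$ by Proposition~\ref{prop21}; I verified the parity bookkeeping, and your family also absorbs the paper's separate case $k=n-4$ (their $G_6$) as the instance $n-4-k=0$, so you need only Lemma~\ref{lem52}, Proposition~\ref{prop21} and Lemma~\ref{lem31} where the paper additionally needs Theorem~\ref{thm22}. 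The one place you pay for the unification is $k=0$: the paper's $G_5$ carries a pendant directly on the theta core, so its reductions terminate in an unbalanced $C_3$ or a path and the nullity of a signed theta graph is never computed, whereas your construction terminates (when $n-4$ is even) at $\Gamma(\Theta(2,2,1))$ with exactly one unbalanced triangle, and you must check $\eta=0$ by hand. That check does go through: by Corollary~\ref{cor21} the two perfect matchings contribute $(-1)^{2}\cdot 2^{0}=+1$ each, and the quadrangle, which is forced to be unbalanced because the product of the signs of the two triangles equals the sign of the quadrangle, contributes $(-1)^{1+s}\cdot 2^{1}=+2$ with $s$ odd, so $a_4=4\neq 0$. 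Finally, your closing remark is right and worth keeping: every witness is a theta graph with attached trees containing an unbalanced triangle, hence lies in the unbalanced part of $\Gamma(\Theta_{n})$, as the statement requires.
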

\begin{proof}
 It suffices to show that for each $k\in [0,n-4]$, there exists an unbalanced bicyclic signed graph $\Gamma(G)\in \Gamma(\Theta_n)$ such that $\eta(\Gamma(G))=k$.

{\bf Case 1: } $k=0$.

Let $G=G_{5}$ shown in Fig.6, where $\Gamma(G_{5})$ contains at least an unbalanced triangle. By Lemma \ref{lem11} and Theorem \ref{thm22} (when $n$ is odd) or Proposition \ref{prop21}  (when $n$ is even), we have $\eta(\Gamma(G))=0$.

{\bf Case 2: } $k=n-4$.

Let $G=G_{6}$ shown in Fig.6, where $\Gamma(G_{6})$ contains at least an unbalanced triangle. By Lemma \ref{lem11} and Proposition \ref{prop21}, we have $\eta(\Gamma(G))=\eta(\Gamma(P_3)\cup (n-5)\Gamma(K_1))=\eta(\Gamma(P_3))+(n-5)\eta(\Gamma(K_1))=1+(n-5)=n-4$.

{\bf Case 3: } $1\leq k\leq n-5$.

{\bf Subcase 3.1: } $n-k$ is odd.

Let $G=G_{7}$ shown in Fig.6, where the two triangles of $G_{7}$ are unbalanced. By using Lemma \ref{lem11} repeatedly, after $\frac{n-k-3}{2}$ steps, we obtain the graph $\Gamma(\Theta(2,2,1))\cup (k-1)\Gamma(K_{1})$. Hence $\eta(\Gamma(G))=\eta(\Gamma(\Theta(2,2,1))\cup (k-1)\Gamma(K_{1}))=\eta(\Gamma(\Theta(2,2,1)))+(k-1)\eta(\Gamma(K_{1}))=1+(k-1)=k$ by Lemma \ref{lem11} and Lemma \ref{lem52}.

\vskip.3cm

\begin{picture}(400, 80)
\put(20,60){\circle*{2}}
\put(20,80){\circle*{2}}
\put(60,40){\circle*{2}}
\put(60,80){\circle*{2}}
\put(100,60){\circle*{2}}
\put(140,60){\circle*{2}}
\put(120,60){\circle*{2}}

\put(20,60){\line(2,1){40}}
\put(20,60){\line(0,1){20}}
\put(20,60){\line(2,-1){40}}
\put(60,40){\line(2,1){40}}
\put(60,40){\line(0,1){40}}
\put(60,80){\line(2,-1){40}}
\put(100,60){\line(1,0){20}}

\put(180,60){\circle*{2}}
\put(220,80){\circle*{2}}
\put(220,40){\circle*{2}}
\put(260,60){\circle*{2}}
\put(300,60){\circle*{2}}
\put(340,60){\circle*{2}}
\put(380,60){\circle*{2}}
\put(400,50){\circle*{2}}
\put(400,70){\circle*{2}}

\put(180,60){\line(2,1){40}}
\put(180,60){\line(2,-1){40}}
\put(220,80){\line(2,-1){40}}
\put(220,80){\line(0,-2){40}}
\put(220,40){\line(2,1){40}}
\put(260,60){\line(1,0){40}}
\put(340,60){\line(1,0){40}}
\put(380,60){\line(2,-1){20}}
\put(380,60){\line(2,1){20}}

\put(60,15){$G_{5}$}
\put(123,60){$\ldots$}
\put(120,55){$\underbrace{\quad }$}
\put(120,35){$P_{n-5}$}

\put(300,10){$G_{7}$}
\put(398,55){$\vdots$}
\put(390,60){\circle{60}}
\put(380,30){$S_{k+1}$}
\put(312,60){$\ldots$}
\put(295,55){$\underbrace{\quad \quad \quad \quad }$}
\put(305,35){$P_{n-k-5}$}
\end{picture}\\

\begin{picture}(400, 80)
\put(20,60){\circle*{2}}
\put(60,40){\circle*{2}}
\put(60,80){\circle*{2}}
\put(100,60){\circle*{2}}

\put(20,60){\line(2,1){40}}
\put(20,60){\line(2,-1){40}}
\put(60,80){\line(2,-1){40}}
\put(60,80){\line(0,-2){40}}
\put(60,40){\line(2,1){40}}
\put(60,80){\line(2,1){30}}
\put(60,80){\line(-2,1){30}}
\put(50,95){\circle*{2}}
\put(60,95){\circle*{2}}
\put(70,95){\circle*{2}}

\put(180,40){\circle*{2}}
\put(180,80){\circle*{2}}
\put(220,40){\circle*{2}}
\put(220,80){\circle*{2}}
\put(260,60){\circle*{2}}
\put(300,60){\circle*{2}}
\put(340,60){\circle*{2}}
\put(380,60){\circle*{2}}
\put(400,50){\circle*{2}}
\put(400,70){\circle*{2}}

\put(180,40){\line(0,1){40}}
\put(180,40){\line(1,0){40}}
\put(180,80){\line(1,0){40}}
\put(220,40){\line(2,1){40}}
\put(220,40){\line(0,1){40}}
\put(220,80){\line(2,-1){40}}
\put(260,60){\line(1,0){40}}
\put(340,60){\line(1,0){40}}
\put(380,60){\line(2,1){20}}
\put(380,60){\line(2,-1){20}}

\put(60,10){$G_{6}$}
\put(300,10){$G_{8}$}
\put(398,55){$\vdots$}
\put(390,60){\circle{60}}
\put(380,30){$S_{k}$}
\put(312,60){$\ldots$}
\put(295,55){$\underbrace{\quad \quad \quad \quad }$}
\put(305,35){$P_{n-k-5}$}

\put(200,-10){Fig.6}
\end{picture}

{\bf Subcase 3.2: } $n-k$ is even.

Let $G=G_{8}$ shown in Fig.6, where the  triangle of $G_{8}$ is unbalanced and the quadrangle is balanced.
 By using Lemma \ref{lem11} repeatedly, after $\frac{n-k-2}{2}$ steps, we obtain the graph $\Gamma(C_4)\cup (k-2)\Gamma(K_{1})$.
  Hence $\eta(\Gamma(G))=\eta(\Gamma(C_4)\cup (k-2)\Gamma(K_{1}))=\eta(\Gamma(C_4))+(k-2)\eta(\Gamma(K_{1}))=2+(k-2)=k$ by Lemma \ref{lem11} and Theorem \ref{thm22}.
\end{proof}

\subsection{In conclusion}

\hskip.6cm From the above discussion, by Theorem \ref{thm52}, Theorem \ref{thm54}, Theorem \ref{thm56} and Theorem \ref{thm110}, we can obtain the following results immediately.

 \begin{theo}\label{thm57}
Let $n\geq 8$. Then the nullity set of unbalanced bicyclic signed graphs  is $[0,n-4]$.
\end{theo}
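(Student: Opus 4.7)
The plan is to assemble Theorem \ref{thm57} as a direct consolidation of the sectional results proved in Subsections 5.1--5.3. Since every bicyclic graph belongs to exactly one of the three families $B_n^+$, $B_n^{++}$, $\Theta_n$, every unbalanced bicyclic signed graph on $n$ vertices lies in $\Gamma(B_n^+)\cup \Gamma(B_n^{++})\cup \Gamma(\Theta_n)$, so the nullity set we seek is simply the union of the three subfamily-wise nullity sets of their unbalanced members.

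For the upper bound $\eta(\Gamma(G))\le n-4$, I would split by family. If $\Gamma(G)\in \Gamma(B_n^+)$ or $\Gamma(G)\in \Gamma(B_n^{++})$, then Theorems \ref{thm51} and \ref{thm53} furnish the strictly sharper bound $\eta(\Gamma(G))\le n-6\le n-4$. If $\Gamma(G)$ is an unbalanced member of $\Gamma(\Theta_n)$, Proposition \ref{prop51} (which already extracts from Lemma \ref{lem52} the fact that the sole extremal case $\Gamma(\Theta(2,2,1))$ occurs at $n=4$) gives $\eta(\Gamma(G))\le n-4$. Taking the maximum of the three bounds yields $\eta(\Gamma(G))\le n-4$ throughout.

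For realizability, I would observe that Theorem \ref{thm56} alone already realizes every value $k\in [0,n-4]$ by an unbalanced signed graph inside $\Gamma(\Theta_n)$, so no new construction is required; Theorems \ref{thm52} and \ref{thm54} provide redundant second witnesses for $k\in [0,n-6]$ but are not strictly necessary for the union to cover $[0,n-4]$.

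There is no genuine obstacle here beyond bookkeeping; the one item to double-check is that the hypothesis $n\ge 8$ is exactly the strongest threshold among the ingredient statements (Theorem \ref{thm51} needs $n\ge 7$, Theorems \ref{thm53} and \ref{thm54} need $n\ge 8$, Theorem \ref{thm56} needs $n\ge 6$, Proposition \ref{prop51} needs $n\ge 5$), so $n\ge 8$ is precisely the minimum under which all pieces apply simultaneously. Theorem \ref{thm110}, cited in the surrounding text, then handles the balanced case via the switching equivalence $\Gamma(G)\sim (G,+)$ of Theorem \ref{thm11}, yielding the same set $[0,n-4]$ and confirming that the conclusion remains valid when balanced graphs are included.
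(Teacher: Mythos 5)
Your proof is correct and takes essentially the same route as the paper, which obtains Theorem \ref{thm57} ``immediately'' as the union of the subfamily nullity sets from Theorems \ref{thm52}, \ref{thm54} and \ref{thm56}, with the upper bounds supplied by Theorems \ref{thm51}, \ref{thm53} and Proposition \ref{prop51}. Your bookkeeping of the thresholds (with $n\ge 8$ forced by Theorems \ref{thm53} and \ref{thm54}) and your remark that Theorem \ref{thm110} is only needed for the balanced case, i.e.\ for Theorem \ref{thm58}, agree with the paper's implicit reasoning.
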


 \begin{theo}\label{thm58}
Let $n\geq 8$. Then the nullity set of  bicyclic signed graphs  is $[0,n-4]$.
\end{theo}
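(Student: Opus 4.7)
The plan is to prove both set-inclusions; since Theorem~\ref{thm57} already gives the statement for unbalanced bicyclic signed graphs, nearly all the work has been done and only the balanced case needs to be folded in.

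For the inclusion $\eta(\Gamma(\mathcal{B}_n))\subseteq[0,n-4]$, I would split $\Gamma(\mathcal{B}_n)$ into balanced and unbalanced graphs. If $\Gamma(G)$ is unbalanced, Theorem~\ref{thm57} (equivalently the individual upper bounds in Theorem~\ref{thm51}, Theorem~\ref{thm53} and Proposition~\ref{prop51}) immediately gives $\eta(\Gamma(G))\le n-4$. If $\Gamma(G)$ is balanced, then by Theorem~\ref{thm11} it is switching equivalent to $(G,+)$, and switching equivalent signed graphs have the same spectrum (since $A(\Gamma(G)^\theta)=D^\theta A(\Gamma(G))D^\theta$ with $D^\theta$ a signature matrix). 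Hence $\eta(\Gamma(G))=\eta(G)$, and since $G\in\mathcal{B}_n$, Theorem~\ref{thm110} yields $\eta(G)\le n-4$.

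For the reverse inclusion $[0,n-4]\subseteq \eta(\Gamma(\mathcal{B}_n))$, no new construction is required: Theorem~\ref{thm57} produces, for every $k\in[0,n-4]$, an \emph{unbalanced} bicyclic signed graph on $n$ vertices with nullity exactly $k$, and any unbalanced bicyclic signed graph is a fortiori a bicyclic signed graph.

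Combining the two inclusions gives $\eta(\Gamma(\mathcal{B}_n))=[0,n-4]$, as required. There is essentially no obstacle here; the only subtlety is noticing that to bound the balanced case we must invoke switching equivalence (Theorem~\ref{thm11}) to reduce to the classical result of Theorem~\ref{thm110}, rather than try to push the signed-graph arguments of Sections~5.1--5.3 through directly.
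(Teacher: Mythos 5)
Your proof is correct and follows essentially the same route as the paper, which derives the result immediately from Theorems~\ref{thm52}, \ref{thm54}, \ref{thm56} (equivalently Theorem~\ref{thm57}) together with Theorem~\ref{thm110}. The only difference is presentational: you spell out the switching-equivalence reduction of the balanced case to the underlying simple graph, which the paper handles by its remark following Theorem~\ref{thm11} rather than inside the proof itself.
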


When $4\leq n\leq 7$, the nullity set of  bicyclic signed graph is easy to obtain by known results and directly calculating, so we omit it.

\end{document}